\newtheorem{theorem}{Theorem}
\newtheorem{lemma}{Lemma}
\newtheorem{corollary}{Corollary}
\theoremstyle{definition}
\newtheorem{definition}{Definition}
\theoremstyle{remark}
\newtheorem{remark}{Remark}
\numberwithin{equation}{section}
\newcommand{\field}[1]{\ensuremath{\mathbb{#1}}}
\newcommand{\CC}{\field{C}}
\newcommand{\PP}{\field{P}}
\newcommand{\ZZ}{\field{Z}}
\newcommand{\curly}[1]{\mathscr{#1}}
\newcommand{\cA}{\curly{A}}
\newcommand{\cC}{\curly{C}}
\newcommand{\cF}{\curly{F}}
\newcommand{\cL}{\curly{L}}
\newcommand{\cM}{\curly{M}}
\newcommand{\cN}{\curly{N}}
\newcommand{\cO}{\curly{O}}
\newcommand{\cP}{\curly{P}}
\newcommand{\cS}{\curly{S}}
\newcommand{\cU}{\curly{U}}
\newcommand{\cW}{\curly{W}}
\newcommand{\GL}{\mathrm{GL}}
\newcommand{\OO}{\mathcal{O}}
\DeclareMathOperator{\tr}{tr} 
\DeclareMathOperator{\Aut}{Aut}
\DeclareMathOperator{\End}{End}
\DeclareMathOperator{\Par}{Par}
\DeclareMathOperator{\Res}{Res} 
\DeclareMathOperator{\Ad}{Ad}
\begin{document}

\title[ Bundle automorphisms over the Riemann sphere]{Remarks on groups of bundle automorphisms over the Riemann sphere}


\author[Meneses]{Claudio Meneses}
\address{\noindent Centro de Investigaci\'on en Matem\'aticas A.C., Jalisco S/N,
 Valenciana, \indent C.P. 36023, Guanajuato, Guanajuato, Mexico}
\curraddr{Mathematisches Seminar, Christian-Albrechts Universit\"at zu Kiel, \indent Ludewig-Meyn-Str. 4, 
24118 Kiel, Germany}
\email{claudio.meneses@cimat.mx} 

\thanks{}

\thanks{}

\subjclass[2010]{Primary 22F50, 32L05, 14M15; Secondary 22E25, 14D20}

\date{}

\dedicatory{}
\begin{abstract}
A geometric characterization of the structure of the group of automorphisms of an arbitrary Birkhoff-Grothendieck bundle splitting $\bigoplus_{i=1}^{r} \OO(m_{i})$ over $\CC\PP^{1}$ is provided, in terms of its action on a suitable space of generalized flags in the fibers over a finite subset $S\subset\CC\PP^{1}$. The relevance of such characterization derives from the possibility of constructing geometric models for diverse moduli spaces of stable objects in genus 0, such as parabolic bundles, parabolic Higgs bundles, and logarithmic connections, 
as collections of orbit spaces of parabolic structures and compatible geometric data satisfying a given stability criterion, under the actions of the different splitting types' automorphism groups, that are glued in a concrete fashion. We illustrate an instance of such idea, on the existence of several natural representatives for the induced actions on the corresponding vector spaces of (orbits of) logarithmic connections with residues adapted to a parabolic structure.
\end{abstract}

\maketitle


\tableofcontents

\section{Introduction}\label{sec:intro}

The aim of this work is  to provide a natural geometric interpretation of the automorphism groups of holomorphic vector bundles over the Riemann sphere. 
According to a foundational theorem of Grothendieck  \cite{Groth57}, every bundle $E\to \CC\PP^{1}$ splits as a direct sum of line bundles\footnote{Grothendieck's result holds for holomorphic principal bundles whose structure group is an arbitrary reductive group. For simplicity, we will only consider the case of $\GL(r,\CC)$ (that is, the vector bundle case).} 
\[
E\cong\bigoplus_{i=1}^{r} \OO(m_{i}). 
\]
Since
\[
\End(E) = E^{\vee}\otimes E \cong \End\left(\bigoplus_{i=1}^{r} \OO(m_{i})\right),
\]
it follows that the group $\Aut(E)$, defined in terms of the invertible elements in the algebra of global holomorphic endomorphisms $H^{0}\left(\CC\PP^{1},\End(E)\right)$, admits a parametrization in terms of matrix-valued polynomials of block-triangular form. Such parametrization of $\Aut(E)$ dependens on the choice of affine trivializations for $E$, and any two affine trivializations differ precisely by an element in $\Aut(E)$. 

What motivates this work is the observation that, despite the previous trivial characterizations of bundle automorphisms, such groups in fact posses an interesting geometric structure. The structure manifests in the study of moduli spaces of vector bundles with stable parabolic structures \cite{MS80} when the base Riemann surface is $\CC\PP^{1}$, their natural K\"ahler structures \cite{MenTak14}, and their rich hyper-K\"ahler generalizations, namely, the moduli spaces of stable parabolic Higgs bundles and logarithmic connections. 
In general, the construction of moduli spaces of stable structures for the previous geometric data on a compact Riemann surface $\Sigma$ requires to fix the topology of $E$ (i.e., its degree and rank), together with an additional numerical data necessary to define the notion of parabolic degree and stability -- a set of marked points on $\Sigma$, and a collection of parabolic weights satisfying certain inequalities. The peculiarities of $\CC\PP^{1}$ among all Riemann surfaces imply that a parabolic bundle may be thought of as a splitting type (a choice of local trivializations of $E$, over each of the affine charts $\CC_{0}$ and $\CC_{1}$ of $\CC\PP^{1}$),\footnote{The splitting coefficients of each underlying bundle are only \emph{holomorphic} (i.e., not topological) invariants of $E$. The different admissible splitting types for a given choice of parabolic weights determine moduli spaces' stratifications.} together with an \emph{orbit} of collections of flags in $\CC^{r}$, under the action of $\Aut (E)$, satisfying a suitable stability condition. In a similar way, a stable parabolic Higgs pair is determined by an orbit of parabolic structures and compatible parabolic Higgs fields satisfying a stability condition. Since both the parabolic Higgs fields and the logarithmic connections in $E$ can be determined by their residues along the set of marked points and a certain matrix-valued differential (see section \ref{sec:gauge}), it is enough to study the action of $\Aut(E)$ on the latter. Such a structure provides relatively simple moduli spaces geometric models, exclusive of $\CC\PP^{1}$ as a Riemann surface. 

If $\textrm{rank}(E) = 2$, the technicalities arising from the geometry of group actions are relatively mild, as the groups $\Aut(E)$ are always a semidirect product of abelian groups \cite{Men17, MenSpi17}.  In general, the geometric properties of the action of the groups $\Aut(E)$ on $n$-tuples of flags are an additional subtlety that needs to be resolved, if any sensible geometric construction of such moduli spaces of stable structures is expected in arbitrary rank. 
At the same time, it is precisely the nature of 
those group actions 
what encodes rich and subtle geometric properties that could render an explicit picture on the wall-crossing phenomenon that manifests when variations of parabolic weights are allowed. The description and study of such group actions doesn't seem to exist in the literature. Moreover, general treatments, such as \cite{MS80,BH95,Tha02}, describe the construction of the corresponding moduli spaces with a caveat precisely for genus 0.  A first step to deal with the arbitrary rank case, would be to find a space of generalized flags on which the action of $\Aut(E)$ is \emph{optimal}.

Following the principle of understanding the structure of a group in terms of its actions, we will construct a space, extracted from the splitting coefficients of the vector bundle $E$, on which the group of bundle automorphisms naturally acts, whose points consist of generalized flags on the fibers over a finite set in $\CC\PP^{1}$, and which is a torsor for $\Aut(E)$, up to a minimal residual subgroup. Intuitively, the group action that we will consider can be thought of as a natural generalization of the standard action of the group $\GL(2,\CC)$ on the configuration space of triples of points in $\CC\PP^{1}$ by means of M\"obius transformations, or certain affine actions  of Nagata type \cite{Muk05}.

The work is organized as follows: section \ref{sec:moduli} presents the general scheme under which the moduli spaces of stable parabolic bundles over $\CC\PP^{1}$ can be constructed by means of the simultaneous actions of groups of bundle automorphisms, which serves as motivation for the rest of the work. In section \ref{sec:pre} we set up conventions and prove a couple of preparatory results. Then, theorem \ref{theo:factorization} in section \ref{sec:bundles} provides a factorization of $\Aut(E)$ in terms of special parabolic subgroups, which constitute our basic building blocks. In section \ref{sec:actions} we construct a space of generalized flags in the bundle $E$, and show in theorem \ref{theo:normalization} (our main result) that the induced action of the group $\Aut(E)$ is transitive and free up to a residual subgroup. The latter result implies that a canonical normalization of such flags can always be attained (corollary \ref{cor:normalization}).  The normalization results are applied in section \ref{sec:gauge} to determine the existence of a couple of holomorphic gauges of logarithmic connections adapted to a given parabolic structure, which we have called the Bruhat and Riemann-Hilbert gauges, and which are described, respectively, in corollaries \ref{cor:Bruhat} and \ref{cor:Riemann-Hilbert}. Further details of such gauges are provided for the rank 2 case.

\section{Parabolic structures and moduli space models on $\CC\PP^{1}$}\label{sec:moduli}

The existence and construction of moduli spaces of rank $r \geq 2$ stable parabolic Higgs bundles and logarithmic connections on a compact Riemann surface $\Sigma$ depends on the a priori choice of a set of \emph{parabolic weights}, i.e. a collection of real numbers 
\[
\cW =\left\{0\leq \alpha_{i1}\leq \dots\leq \alpha_{ir} < 1,\quad i=1,\dots,n, \; :\; \sum_{i=1}^{n}\sum_{j=1}^{r}\alpha_{ij}\in\ZZ\right\}.
\]
which is necessary to define the notions of stability and parabolic degree. In precise terms, a rank $r$ parabolic bundle $E_{*}$ of parabolic degree 0 \cite{MS80} consists of an underlying vector bundle $E\to \Sigma$ of rank $r$, such that 
\[
\textrm{par}\deg (E_{*}) := \deg (E) + \sum_{i=1}^{n}\sum_{j=1}^{r}\alpha_{ij} = 0,
\]
together with a collection of descending flags 
\[
E|_{z_{i}} = E_{i1}\supset \dots\supset E_{i s_{i}}\supset \{0\}
\]
over the fibers of a finite subset $S =\{z_{1},\dots,z_{n}\}\subset\Sigma$, whose multiplicity type is subordinate to the parabolic weights' multiplicities of $\cW$, in the sense that for each $1 \leq i \leq n$, the partitions
\[
r = j_{1} + \dots + j_{s_{i}}
\]
(where $j_{k} = \dim E_{i k} - \dim E_{i k+1}$, and letting $E_{i s_{i} + 1} = \{0\}$), are such that
\[
\alpha_{i1} = \alpha_{ij_{1}} < \alpha _{ij_{1} +1}= \alpha_{i j_{1} + j_{2}} < \dots < \alpha_{ij_{1} + \dots + j_{s_{i} - 1} + 1} = \alpha_{i j_{1} + \dots + j_{s_{i}}}.
\]
Parabolic bundle endomorphisms are then defined to be endomorphisms $g\in H^{0}(\Sigma,\End(E))$ preserving the parabolic structure of $E_{*}$, that is, such that for every $1 \leq i \leq n$ and $1\leq j \leq s_{i}$, 
\[
g\left(E_{ij}\right) \subset E_{ij}.
\]
In other words, for every $1 \leq i \leq n$, the restriction $g|_{z_{i}}$ is an element of the Lie algebra $\mathfrak{p}\left(E|_{z_{i}}\right)\subset \End\left(E|_{z_{i}}\right)$ of the parabolic subgroup $\mathrm{P}\left(E|_{z_{i}}\right)\subset \textrm{GL}\left(E|_{z_{i}}\right)$ stabilizing the flag on $E|_{z_{i}}$.

Given a parabolic bundle $E_{*}$, let us denote by $\mathcal{E}$ the structure sheaf of the underlying bundle $E$. The subsheaf $\Par\End (\mathcal{E})\to \End(\mathcal{E})$ of bundle endomorphisms preserving the parabolic structure of $E_{*}$ is locally-free, and hence equivalent to a vector bundle on $\Sigma$, that we will denote by $\Par\End (E_{*})$ (see remark \ref{remark:explicit-bundle} in section \ref{sec:gauge} for details when  $\Sigma = \CC\PP^{1}$).  The relevance of such a vector bundle is manifested in the vector space
\[
H^{1}\left(\Sigma,\Par\End(E_{*})\right)
\]
which models infinitesimal deformations of the parabolic bundle $E_{*}$.
In the same spirit, a parabolic Higgs field $\Phi$ on $E_{*}$ is an element of the vector space
\[
H^{0}\left(\Sigma,\Par\End(E_{*})^{\vee}\otimes K_{\Sigma}\right).
\]
Equivalently, a parabolic Higgs field in $E_{*}$ is an $\End(E)$-valued meromorphic differential on $\Sigma$, holomorphic on $\Sigma\setminus S$,  and with simple poles at every marked point $z_{i}$, whose residues belong to the unipotent radical Lie algebra $\mathfrak{n}\left(E|_{z_{i}}\right)\subset\mathfrak{p}\left(E|_{z_{i}}\right)$.

In turn, a logarithmic connection $\nabla$ adapted to the parabolic structure of $E_{*}$ (\cite{Biq91}; cf. \cite{Simp90,LS13}), is a meromorphic connection on $E$, holomorphic on $\Sigma\setminus S$, and such that for every $1 \leq i \leq n$, there exist a sufficiently small neighborhood $\cU_{i}$ of any point $z_{i} \in \Sigma$ with local coordinate $z-z_{i}$, and a holomorphic trivialization of $E|_{\cU_{i}}$ mapping the flag in $E|_{z_{i}}$ to the standard flag in $\CC^{r}$ with multiplicities $(j_{1},\dots,j_{s_{i}})$, for which the local basis given by
\[
s_{ij} = (z-z_{i})^{-\alpha_{ij}}\mathbf{e}_{j}
\]
is annihilated by $\nabla$,  
\[
\nabla|_{\cU_{i}}\left(s_{ij}\right) = 0,
\]
where $\left\{\mathbf{e}_{j}\right\}$ is the canonical basis in $\CC^{r}$. Equivalently, over such given local trivialization of $E|_{\cU_{i}}$, a logarithmic connection takes the form
\[
\nabla|_{\cU_{i}} = \mathrm{d} + \frac{W_{i}}{z-z_{i}}\mathrm{d}z
\]
where 
\[
W_{i} = \begin{pmatrix}
\alpha_{i1} & & 0\\
& \ddots & \\
0 & &\alpha_{ir}
\end{pmatrix}
\]
(see section \ref{sec:gauge} for an explicit description when $\Sigma = \CC\PP^{1}$). The corresponding local holomorphic trivialization at $\cU_{i}$ of any other logarithmic connection $\nabla'$ adapted to the parabolic structure of $E_{*}$ would give rise to a holomorphic map $E|_{\cU_{i}} \to E|_{\cU_{i}}$ whose restriction to $z_{i}$ belongs to the parabolic group $P\left(E|_{z_{i}}\right)$. It readily follows that 
\[
\nabla' = \nabla +\Phi
\]
for some parabolic Higgs field $\Phi \in H^{0}\left(\Sigma,\Par\End(E_{*})^{\vee}\otimes K_{\Sigma}\right)$. Hence, the space of logarithmic connections adapted to the parabolic structure of $E_{*}$ is an affine space for $H^{0}\left(\Sigma,\Par\End(E_{*})^{\vee}\otimes K_{\Sigma}\right)$.

Every subbundle $F\subset E$ inherits a parabolic structure from $E_{*}$ according to the following rules. At every $z_{i}$, the intersections 
\[
F|_{z_{i}}\cap E_{ik}
\]
are not necessarily distinct. A reindexing of the different induced subspaces gives rise to a generalized flag $F|_{z_{i}} = F_{i1} \supset F_{i2} \supset \dots \supset F_{is'_{i}}\supset\{0\}$. The weight $\alpha'_{ik}$ of the subspace $F_{ik}$ is decreed as
\[
\alpha'_{ik} = \max\{\alpha_{il} \; :\; F_{ik} = F|_{z_{i}}\cap E_{il}\}.
\]
A parabolic Higgs pair $(E_{*},\Phi)$ for which $\textrm{par}\deg(E_{*}) = 0$ is said to be \emph{parabolic stable} (resp. semi-stable) if for every $\Phi$-invariant subbundle $F\subset E$, 
\[
\mathrm{par}\deg(F_{*}) < 0 \qquad (\text{resp. $\leq$}).
\]
In particular, a parabolic bundle $E_{*}$ of parabolic degree 0 is stable if for every subbundle $F\subset E$, $\mathrm{par}\deg(F_{*}) < 0$. Similarly, a logarithmic connection $\nabla$ adapted to $E_{*}$ is \emph{parabolic stable} (resp. semi-stable) \cite{Simp90} if for every subbundle $F\subset E$ preserved by $\nabla$, we have that $\mathrm{par}\deg(F_{*}) < 0$ (resp. $\leq$).

The case $\Sigma = \CC\PP^{1}$ is rather special, as not only $n \geq 3$ necessarily, but also not every admissible set of weights for a fixed (necessarily negative) degree determines nonempty moduli spaces. Jeffrey \cite{Jef94} provided a moment map construction of all moduli spaces of stable parabolic bundles for a given degree and rank, exhibiting the convex polytope nature of the space of admissible weights yielding nonempty moduli spaces. In \cite{Bis98,Bel01,Bis02}, Biswas  and Belkale provided independently an explicit description of such a weight polytope. 
Generically, the admissible weights are complete (i.e., don't have multiplicities), and hence the flags on each fiber can be parametrized by the complete flag manifold $\cF_{r}$, and moreover, every semi-stable parabolic bundle is stable, so that the corresponding moduli space is compact \cite{BH95}. Let us fix once and for all a choice of such $\cW$. We will denote by $\cM$ (resp. $\cL$) the moduli space of stable parabolic Higgs bundles of parabolic degree 0 (resp. logarithmic connections) with respect to $\cW$, and by $\cN\subset\cM$ the corresponding moduli space of stable parabolic bundles. 

The crucial observation is that there exists a relatively simple description of the points in $\cN$ and $\cM$, encoded entirely in the geometric properties of the action of the groups $\Aut\left(E\right)$. The Birkhoff-Grothendieck theorem \cite{Groth57} states that every holomorphic vector bundle $E\to \CC\PP^{1}$ is isomorphic to a direct sum of line bundles $\bigoplus_{i=1}^{r} \OO(m_{i})$, that is, if we cover $\CC\PP^{1}$ with affine charts $\{\CC_{0},\CC_{1}\}$, $\CC_{0}\cap\CC_{1}=\CC^{*}$, $E$ can be prescribed by a single diagonal transition function
\[
g_{01}(z)=
\begin{pmatrix}
z^{m_{1}} & & 0 \\
 & \ddots &  \\
0 & & z^{m_{r}}
\end{pmatrix}
\]
which can be written more succinctly as $g_{01}(z)=z^{N}$ if we let
\[
N = \begin{pmatrix}
m_{1} & & 0\\
& \ddots &\\
0 & & m_{r}
\end{pmatrix}.
\]
Moreover, any two such isomorphisms differ by postcomposition by an automorphism of the bundle splitting. To simplify notations, we will denote such bundle splitting types simply as $E_{N}$. The parabolic stability condition on any given $\left(E_{N}\right)_{*}$ implies that 
\[
H^{0}\left(\CC\PP^{1}, E_{N} \right) = 0,
\]
hence $m_{1},\dots, m_{r}  < 0$, yielding a finite number of potential splitting matrices $N$ for each degree $ -nr < \deg\left(E_{N}\right) \leq -r$.

For each $i=1,\dots, n$, let  $\cF\left(E_{i}\right)$ be the manifold of complete descending flags on the fiber $E_{i} = E|_{z_{i}}$ over $z_{i}$ in $E$. Every isomorphism $E \cong E_{N}$ induces isomorphisms $\cF\left(E_{i}\right) \cong \cF_{r}$. 
Hence, a parabolic structure on $E$ can be modeled as a point in the product of complete flag manifolds 
\[
\cF^{1}_{r}\times\dots\times\cF^{n}_{r}
\]
and two $n$-tuples of flags define equivalent parabolic structures if and only they differ by the left action of an element in $\Aut\left(E_{N}\right)$. Consequently, a point in $\cN$ is \emph{determined uniquely} by a pair 
\[
\left(E_{N},[\mathbf{F}]\right)
\]
where 
$[\mathbf{F}]$ is an {orbit} of $n$-tuples $\mathbf{F} = (F_{1},\dots,F_{n})$ of complete descending flags under the induced action of the group $\Aut\left(E_{N}\right)$, satisfying a stability condition dictated by $\cW$. Similarly, in terms of a pair $\left(E_{N},\mathbf{F}\right)$, a parabolic Higgs field $\Phi$ is uniquely determined by the data $\mathbf{B}$ consisting of a collection of residue matrices 
\[
c_{i}\in \mathfrak{n}\left(F_{i}\right),\qquad 1 \leq i \leq n,
\]
(where $\mathfrak{n}\left(F_{i}\right)$ is the unipotent radical of the Lie algebra of $\mathrm{P}(F_{i})$, the parabolic subgroup stabilizing $F_{i}$) satisfying a compatibility condition, together with an element 
\[
F\in H^{0}\left(\CC\PP^{1}, \End\left(E_{N}\right)\otimes K_{\CC\PP^{1}}\right) 
\]
(corollary \ref{corollary:parabolic Higgs field}). Hence, a point in $\cM$ is determined uniquely by a pair 
\[
\left(E_{N},[(\mathbf{F},\mathbf{B})]\right)
\]
where $[(\mathbf{F},\mathbf{B})]$ is an $\Aut\left(E_{N}\right)$-orbit of pairs of flag and parabolic Higgs field data, satisfying a stability condition. The same idea holds for the moduli spaces $\cL$.
We are then lead to the construction of geometric models for the moduli spaces $\cL$, $\cM$ and $\cN$ in terms of explicit parameter spaces. For simplicity, we will only describe such a construction in the case of $\cN$, as it motivates the main results of this work. The mechanism for the construction of $\cL$ and $\cM$ is analogous.

For each isomorphism class $\{E_{*}\}\in \cN$, the splitting type of the underlying vector bundle $E$ is encoded in its \emph{Harder-Narasimhan filtration}.  Such filtration determines a stratification of $\cN$  
\[
\cN = \bigsqcup \cN_{N},
\]
where each stratum is formed by equivalence classes of stable parabolic bundles over $\CC\PP^{1}$ whose underlying holomorphic bundle has splitting type $N$. 
In particular, over each connected component of a Harder-Narasimhan stratum $\cN_{N}$ we can fix simultaneous isomorphisms $E\cong E_{N}$ in such a way that the isomorphism classes of stable parabolic structures correspond to a space of $\Aut\left(E_{N}\right)$-orbits on $\cF^{1}_{r}\times\dots\times\cF^{n}_{r}$. 
If we denote by $\cO_{N}$ the space of $\Aut\left(E_{N}\right)$-orbits of $n$-tuples of flags in $\cF^{1}_{r}\times\dots\times\cF^{n}_{r}$ that are stable when regarded as parabolic structures in some $E_{N}$, we have that
\[
\cN \cong \bigsqcup_{N} \cO_{N}\bigg/\sim 
\]
where the relation $\sim$ is expressible in terms of the intersection of orbit closures in $\cF^{1}_{r}\times\dots\times\cF^{n}_{r}$.\footnote{It should also be remarked that Loray and Saito \cite{LS13} have studied the natural holomorphic symplectic structure of the moduli spaces $\cL$ of logarithmic connections in the case of rank 2 and odd degree using algebro-geometric techniques, in such a way that the moduli spaces $\cN$ arise as the bases of natural Lagrangian fibrations. In particular, they specialize their construction to a choice of parabolic weights for which $\cN$ contains a single Harder-Narasimhan stratum, allowing them to introduce charts and manifold structure in $\cL$, and to study its birrational geometry in an explicit way.} 

Moreover and most importantly, if variations of $\cW$ in the weight polytope are allowed, the induced transformations of $\cN$ that occur when a semistability wall is hit can be understood in terms of degenerations of families of complete flag manifolds (cf. \cite{BH95}). The proof of such results, and ``wall-crossing behavior", depends on a careful and intricate analysis of the geometric characterization of stability, and will be discussed elsewhere.\footnote{As an illustration, we describe a minimal example of such behavior. Consider the case $r = 2$, $\deg(E) = -4$, and $n = 4$, with arbitrary parabolic weights in the open chamber determined by the eight inequalities
\begin{eqnarray}
\alpha_{12} +\alpha_{i2} +\sum_{j \neq 1,i}\alpha_{j1} &>& 2,\quad i = 2,3,4,\label{eq:ineq1}\\
 \alpha_{i1} + \sum_{j \neq i} \alpha_{j2} &<& 3, \quad i = 1,2,3,4,\label{eq:ineq2}\\
\sum_{i = 1}^{4}\alpha_{i1} &<& 1 \label{eq:ineq3}
\end{eqnarray}
which is non-empty, since for any $0 < \delta < \alpha < 1/4$  we can choose 
\[
\alpha_{11} = \delta,\quad \alpha_{21} = \alpha_{31} = \alpha_{41} = \alpha ,\qquad \alpha_{i2} = 1 - \alpha_{i1}.
\]
In particular, a semi-stable parabolic structure is always  strictly stable. Over the splitting  $E = \mathcal{O}(-2)^{2}$, the flag manifolds $\cF(E_{i})$ get identified with the aid of the subbundles $\mathcal{O}(-2)\hookrightarrow E$. It can be verified that for any weights satisfying \eqref{eq:ineq1}--\eqref{eq:ineq2}, the set $\cP^{s}$ of stable parabolic structures on such splitting is biholomorphic to $\cC_{4}$, a configuration space of 4 points in $\cF_{2} \cong \CC\PP^{1}$. Hence, the stratum $\cN_{N}$ of stable parabolic bundles with splitting type $\mathcal{O}(-2)^{2}$ is obtained as the quotient
\[
\cN_{N} \cong P\left(\Aut\left(\mathcal{O}(-2)^{2}\right)\right)\setminus \cP^{s}  \cong \CC\PP^{1} \setminus \{0,1,\infty\}.
\]
In turn, the splitting $E = \mathcal{O}(-3)\oplus\mathcal{O}(-1)$ admits exactly one stable orbit. The unique subbundle $\mathcal{O}(-1)\hookrightarrow E$ stratifies every $\cF(E|_{z})$ 
\[
\cF(E|_{z}) \cong \PP(E|_{z}) = \CC \sqcup \{\infty\}, 
\]
with the lines $\{\infty\}\in\PP(E|_{z})$ being stabilized by $\Aut(E)|_{z}$. It readily follows from \eqref{eq:ineq2} that any $\Aut(E)$-orbit containing a flag at $\infty$ is automatically unstable. Moreover, the affine subspace 
\[
\CC^{4}\subset \PP(E|_{z_{1}})\times \PP(E|_{z_{2}})\times \PP(E|_{z_{3}})\times \PP(E|_{z_{4}}), 
\]
is decomposed into exactly two $\Aut(E)$-orbits, namely, $\Aut(E)\cdot(0,0,0,0)$ and its complement. It can be verified that such orbits correspond to the orbits of the standard  $\CC^{*}$-action $t\cdot w = tw$ on $\CC$, and as a consequence of \eqref{eq:ineq2}--\eqref{eq:ineq3} only the latter (that is, the one corresponding to $\CC\setminus \{0\}$ in $\CC$) is stable.

The picture that emerges from such considerations is the following. The moduli space $\cN$ can be reconstructed from the open stratum $\cN_{N}\subset \cN$ as a 3-point compactification. A neighborhood $\cU_{i}$ of each of these points $w_{1}, w_{2}, w_{3}$ corresponds to a local holomorphic family of stable parabolic bundles, whose underlying bundle $E_{w}$ is isomorphic to $\mathcal{O}(-2)^{2}$ if $w \neq w_{i}$, and to $\mathcal{O}(-3)\oplus \mathcal{O}(-1)$ otherwise.}

\section{Parabolic groups and Bruhat decompositions for $\GL(r,\CC)$}\label{sec:pre}

We will recall a few standard facts for convenience of the exposition. Further details of the general theory may be found in \cite{Bor69,FH91}. For any $r\geq 2$, let $V$ be a complex $r$-dimensional vector space. For any partition $r=i_{1}+\dots + i_{s}$, which we will denote by $\lambda$, let $F_{\lambda}$ be a generalized flag
\[
V = V_{1} \supset V_{2}\supset\dots\supset V_{s}\supset\{0\}
\]
where $i_{k} = \dim V_{k} - \dim V_{k+1}$. The stabilizer of such flag in $\GL(V)$ is a parabolic subgroup $\mathrm{P}\left(F_{\lambda}\right)\subset \GL(V)$. Conversely, any parabolic subgroup of $\GL(V)$ is the stabilizer of some such flag. Upon the choice of a basis $\{\mathbf{e}_{1},\dots,\mathbf{e}_{r}\}$ of $V$ such that
\begin{equation}\label{eq:basis}
V_{k} = \text{Span}\{\mathbf{e}_{i_{1}+\dots+i_{k - 1}+1},\dots,\mathbf{e}_{r}\},
\end{equation}
the group $\mathrm{P}\left(F_{\lambda}\right)$ gets identified with the subgroup  $\mathrm{P}_{\lambda}\subset\mathrm{GL}(r,\CC)$ consisting of block-lower triangular matrices for which the $jk$-th block (for $j\geq k$) is of size $i_{j}\times i_{k}$. In particular, every choice of a generalized flag $F$ in $V$ determines a homogeneous space model for the flag manifold $\cF_{\lambda}$, as the quotient $\mathrm{GL}(V)/\mathrm{P}\left(F_{\lambda}\right)$.

Every parabolic subgroup $\mathrm{P}\left(F_{\lambda}\right)\subset \mathrm{GL}(V)$ admits a semidirect product decomposition 
\[
\mathrm{P}\left(F_{\lambda}\right) =  \mathrm{N}\left(F_{\lambda}\right) \rtimes \mathrm{D}\left(F_{\lambda}\right),
\] 
where $\mathrm{N}\left(F_{\lambda}\right)$, the \emph{unipotent radical} of $\mathrm{P}\left(F_{\lambda}\right)$, is the normal subgroup generated by unipotent elements acting as the identity on every quotient $V_{k}/V_{k+1}$. The complementary group $\mathrm{D}\left(F_{\lambda}\right)$ is not unique. Any choice of it is called a \emph{Levi complement} for $\mathrm{N}\left(F_{\lambda}\right)$, and corresponds to the stabilizer of a choice of splittings of the exact sequences 
\[
0 \to V_{k+1} \to V_{k} \to V_{k}/V_{k+1} \to 0
\]
although any two Levi complements are conjugate.  $\mathrm{P}\left(F_{\lambda}\right)$ contains a minimal parabolic subgroup $\mathrm{B}\left(F_{\lambda}\right)$, known as a \emph{Borel subgroup}. Under the isomorphism $\mathrm{P}\left(F_{\lambda}\right)\cong \mathrm{P}_{\lambda}$, $\mathrm{N}_{\lambda}$ consists of block-lower triangular matrices which are block-unipotent, that is, whose $j$-th diagonal block is equal to $\mathrm{Id}_{i_{j}\times i_{j}}$. The Levi complement of $\mathrm{N}_{\lambda}$ in $\mathrm{P}_{\lambda}$ can then be chosen to consist of the invertible block-diagonal matrices. Let us denote by $\lambda_{r}$ the partition 
\[
r = \underbrace{1 + \dots + 1}_{\text{$r$ times}}. 
\]
Then $\mathrm{P}_{\lambda_{r}}=\mathrm{B}(r)$ is the Borel group of invertible lower triangular matrices, while $\mathrm{N}_{\lambda_{r}}=\mathrm{N}(r)$ is its subgroup of all unipotent matrices. 

\begin{remark}\label{rem:choices}
The identification of parabolic subgroups in $\mathrm{GL}(V)$ and their Levi decompositions, with the concrete matrix groups above, depended on the choice of a basis  of $V$ satisfying \eqref{eq:basis}. 
We will therefore assume that such choice has been made in $V$, and restrict to the matrix group case henceforth. In subsequent sections, we will emphasize the origin of such choices.
\end{remark}

The structure of the Lie algebra of block-lower diagonal matrices $\mathfrak{n}_{\lambda}=\textrm{Lie}(\mathrm{N}_{\lambda})$ is determined by its lower central series. Concretely, denoting by $\mathfrak{n}_{\lambda,jk}\subset \mathfrak{n}_{\lambda}$ the abelian subalgebra of matrices whose nonzero elements lie in the $jk$-th block, one has that $\left[\mathfrak{n}_{\lambda,jk},\mathfrak{n}_{\lambda,k'l}\right]=\delta_{kk'}\mathfrak{n}_{\lambda,jl}$. In particular, 
\[
\mathfrak{n}_{\lambda, jk}=\left[\dots\left[\mathfrak{n}_{\lambda, j j-1},\mathfrak{n}_{\lambda, j-1 j-2}\right],\dots, \mathfrak{n}_{\lambda, k+1 k}\right].
\]

Let $\mathrm{W}(r)$ denote the Weyl group of $\GL(r,\CC)$, and let $\mathrm{W}_{\lambda}$ be the subgroup given as the Weyl group of $\mathrm{D}_{\lambda}$. Then, $\mathrm{W}(r)$ is isomorphic to the symmetric group $\mathrm{S}_{r}$, while $\mathrm{W}_{\lambda}$ is isomorphic to $\mathrm{S}_{i_{1}}\times\dots\times\mathrm{S}_{i_{s}}$ and both can be explicitly realized as subgroups of $\GL(r,\CC)$ by means of permutation matrices. 

Let us denote by $N_{\lambda,[\Pi]}$ the subgroup $\Ad\left(\Pi\right)\left( \mathrm{P}_{\lambda}\right) \cap \mathrm{N}(r) \subset \mathrm{N}(r)$, for any $[\Pi]\in \mathrm{W}(r)/ \mathrm{W}_{\lambda}$, which is clearly independent of the choice of representative $\Pi$ in $[\Pi]$.

\begin{lemma}\label{lemma:semidirect}
For every class $[\Pi]\in \mathrm{W}(r)/\mathrm{W}_{\lambda}$, there is a factorization 
 \[
 \mathrm{N}(r) = \mathrm{N}_{\lambda,[\Pi]}^{c} \cdot \mathrm{N}_{\lambda,[\Pi]} 
\] 
for a subgroup $\mathrm{N}_{\lambda,[\Pi]}^{c}\subset \mathrm{N}(r)$, in such a way that $\mathrm{N}_{\lambda,[\Pi]}^{c}$ and $\mathrm{N}_{\lambda,[\Pi]}$ intersect trivially: 
\[
N_{\lambda, [\Pi]}^{c} \cap N_{\lambda,[\Pi]} = \{\textrm{Id}_{r\times r}\}.
\]
\end{lemma}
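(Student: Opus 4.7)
My plan is to identify the complementary subgroup $\mathrm{N}_{\lambda,[\Pi]}^{c}$ at the Lie algebra level first, and then lift the resulting vector space decomposition of $\mathfrak{n}(r) := \textrm{Lie}(\mathrm{N}(r))$ to a group-theoretic factorization, exploiting the nilpotency of $\mathrm{N}(r)$. Let $\sigma\in\mathrm{S}_r$ be the permutation associated to $\Pi$, so that $\Ad(\Pi)(E_{ij}) = E_{\sigma(i)\sigma(j)}$ on the standard basis $\{E_{ij}\}_{i>j}$ of $\mathfrak{n}(r)$, and let $b:\{1,\dots,r\}\to\{1,\dots,s\}$ record the index of the block of $\lambda$ in which a given row lies. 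Since $\Ad(\Pi^{-1})(E_{ij}) = E_{\sigma^{-1}(i)\sigma^{-1}(j)}$ lies in $\mathfrak{p}_{\lambda}$ precisely when $b(\sigma^{-1}(i))\geq b(\sigma^{-1}(j))$, the set $\Delta = \{(i,j):i>j\}$ partitions disjointly as $\Delta = \Delta_1\sqcup\Delta_2$ with
\[
\Delta_1 = \{(i,j)\in\Delta : b(\sigma^{-1}(i))\geq b(\sigma^{-1}(j))\},
\]
and $\mathfrak{n}_{\lambda,[\Pi]} := \textrm{Lie}(\mathrm{N}_{\lambda,[\Pi]}) = \bigoplus_{\Delta_1}\CC E_{ij}$. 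I would then define $\mathfrak{n}_{\lambda,[\Pi]}^{c} := \bigoplus_{\Delta_2}\CC E_{ij}$, so that $\mathfrak{n}(r) = \mathfrak{n}_{\lambda,[\Pi]}^{c}\oplus\mathfrak{n}_{\lambda,[\Pi]}$ as vector spaces.

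The decisive step is to verify that $\mathfrak{n}_{\lambda,[\Pi]}^{c}$ is a Lie subalgebra. Using $[E_{ij},E_{kl}] = \delta_{jk}E_{il} - \delta_{il}E_{kj}$, a nonzero bracket of two basis elements indexed by $\Delta_2$ is either $[E_{ij},E_{jl}] = E_{il}$, with $b(\sigma^{-1}(i))<b(\sigma^{-1}(j))<b(\sigma^{-1}(l))$, or $[E_{ij},E_{ki}] = -E_{kj}$, with $b(\sigma^{-1}(k))<b(\sigma^{-1}(i))<b(\sigma^{-1}(j))$. In both cases, transitivity of strict inequality produces an index pair again in $\Delta_2$. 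Setting $\mathrm{N}_{\lambda,[\Pi]}^{c} := \exp(\mathfrak{n}_{\lambda,[\Pi]}^{c})$ then yields a closed subgroup of $\mathrm{N}(r)$, consisting precisely of the strictly lower unipotent matrices whose off-diagonal entries are supported on $\Delta_2$.

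To lift the direct sum to a factorization at the group level, given $n\in\mathrm{N}(r)$ I would solve $n = n_c\cdot n_1$ for $n_c\in\mathrm{N}_{\lambda,[\Pi]}^{c}$ and $n_1\in\mathrm{N}_{\lambda,[\Pi]}$ by induction on the off-diagonal depth $i-j$. Expanding the product,
\[
n_{ij} = (n_c)_{ij} + (n_1)_{ij} + \sum_{i>k>j}(n_c)_{ik}(n_1)_{kj},
\]
and since the partition $\Delta=\Delta_1\sqcup\Delta_2$ forces exactly one of $(n_c)_{ij}, (n_1)_{ij}$ to vanish, the single remaining unknown is determined uniquely from entries of smaller depth, already fixed by the induction hypothesis. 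This furnishes $\mathrm{N}(r) = \mathrm{N}_{\lambda,[\Pi]}^{c}\cdot\mathrm{N}_{\lambda,[\Pi]}$. The trivial intersection is then immediate: any element common to both subgroups has off-diagonal support in $\Delta_1\cap\Delta_2 = \emptyset$, hence is $\textrm{Id}_{r\times r}$.

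The main obstacle, though modest, lies in the combinatorial verification of the Lie subalgebra condition for $\mathfrak{n}_{\lambda,[\Pi]}^{c}$ (and correspondingly the subgroup property for $\mathrm{N}_{\lambda,[\Pi]}^{c}$), together with the compatibility of the inductive factorization with this subgroup structure; both ultimately reduce to the transitivity of the strict block ordering on the permuted row indices.
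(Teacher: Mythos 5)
Your proposal is correct and takes essentially the same route as the paper's proof: you single out the identical complementary set of positions (those $(i,j)$ with $i>j$ and $b(\sigma^{-1}(i))<b(\sigma^{-1}(j))$, which is exactly the paper's defining condition for $\mathrm{N}_{\lambda,[\Pi]}^{c}$), and you establish existence and uniqueness of the factorization $n=n_{c}\cdot n_{1}$ by the same induction on the off-diagonal depth $i-j$, using the same entrywise equation. The only cosmetic difference is that you verify closure of $\mathrm{N}_{\lambda,[\Pi]}^{c}$ at the Lie-algebra level and exponentiate, whereas the paper checks the group multiplication directly; both reductions rest on the same transitivity of the strict block ordering on permuted indices.
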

\begin{proof}
Let $i_{0}=0$. The partition $\lambda$ determines a collection of $s$ subsets $\cS_{1},\dots,\cS_{s}\subset\{1,\dots,r\}$, whose elements are the consecutive integers in a given partition interval: $\cS_{j}=\{i_{0}+\dots+i_{j-1}+1,\dots,i_{0}+\dots+i_{j}\}$.
Recall that if $\Pi$ is the permutation matrix corresponding to a permutation $\sigma\in \mathrm{S}_{r}$, that is, $(\Pi)_{jk}=\delta_{\sigma(j)k}$, then the adjoint action of $\Pi$ on a matrix $A=(a_{jk})$ is given as $\Ad(\Pi)(A)_{jk}= a_{\sigma(j)\sigma(k)}$. Then, by definition, the subgroup $\mathrm{N}_{\lambda,[\Pi]}=\Ad\left(\Pi\right)\left(\mathrm{P}_{\lambda}\right) \cap \mathrm{N}(r)$ consists of the unipotent matrices with a zero $jk$-entry, for $ j > k $, whenever $\sigma^{-1}(j)<\sigma^{-1}(k)$, unless $\{\sigma^{-1}(j), \sigma^{-1}(k)\} \subset \cS_{l}$ for some $1 \leq l \leq s$. Define the subset $\mathrm{N}_{\lambda,[\Pi]}^{c}\subset \mathrm{N}(r)$ to consist of those unipotent matrices with a zero $jk$-entry (for $j>k$) whenever $\sigma^{-1}(j)>\sigma^{-1}(k)$, or $\sigma^{-1}(j)<\sigma^{-1}(k)$ if $\{\sigma^{-1}(j), \sigma^{-1}(k)\} \subset \cS_{l}$ for some $1 \leq l \leq s$. Then, in particular, $\mathrm{N}_{\lambda,[\Pi]}^{c} \cap \mathrm{N}_{\lambda,[\Pi]} = \{\textrm{Id}_{r\times r}\}$. It is straightforward to verify that the defining equations of $\mathrm{N}_{\lambda,[\Pi]}^{c}$ are preserved under multiplication, i.e., that $\mathrm{N}_{\lambda,[\Pi]}^{c}$ is a subgroup of $\mathrm{N}(r)$. Moreover, consider any $C \in \mathrm{N}(r)$. That there is a unique solution to the equation $C =AB$, with $A \in \mathrm{N}_{\lambda,[\Pi]}^{c}$, $B \in \mathrm{N}_{\lambda,[\Pi]}$, can be seen inductively. For any $1\leq j \leq r-1$, either $\sigma^{-1}(j) < \sigma^{-1}(j+1)$ or $\sigma^{-1}(j) > \sigma^{-1} (j+1)$ giving a unique solution to $c_{j+1 j} = a_{j+1 j} + b_{j+1 j}$. Now, assuming we know all $a_{l_{1}l_{2}}$ and $b_{l_{1}l_{2}}$ with $l_{1}-l_{2} < j-k$, the same idea determines $a_{jk}$ and $b_{jk}$ from the equation
\[
n_{jk} = a_{jk} + b_{jk} +\sum_{l = k+1}^{j-1} a_{jl}b_{lk}.
\]
\end{proof}

\begin{definition}
For any partition $\lambda$, and any class $[\Pi]\in \mathrm{W}(r)/\mathrm{W}_{\lambda}$, we will let $\mathrm{N}_{\lambda,[\Pi]}^{c}$ be the subgroup of $\mathrm{N}(r)$ defined in lemma \ref{lemma:semidirect}.
\end{definition}

\begin{remark}\label{remark:complement}
Incidentally, the group $\mathrm{N}_{\lambda,[\Pi]}^{c}$ is also of the form $\mathrm{N}_{\lambda_{0},[\Pi_{c}]}$ for some $[\Pi_{c}]\in \mathrm{W}(r)/\mathrm{W}_{\lambda}$, where $\lambda_{0} = i_{s} + \dots + i_{1}$.  Consider the involution on $\mathrm{S}_{r}$, defined by mapping a given permutation $\sigma^{-1}$ to the complementary permutation
\[
\sigma^{-1}_{c}(k)=r-\sigma^{-1}(k)+1.
\] 
Then, $\sigma_{c}$ is the unique permutation in $\mathrm{S}_{r}$ satisfying that $\sigma^{-1}_{c}(j) < \sigma^{-1}_{c}(k)$ if and only if $\sigma^{-1}(j) > \sigma^{-1}(k)$, and $\{\sigma^{-1}_{c}(j),\sigma_{c}^{-1}(k)\}\in \cS^{c}_{s - l +1}$ if and only if $\{\sigma^{-1}(j),\sigma^{-1}(k)\}\in \cS_{l}$ for some $1 \leq l \leq s$, and every $1 \leq j< k \leq r$. If a representative $\Pi$ is chosen in $[\Pi]$, and $\sigma$ is its corresponding permutation, then, a representative $\Pi_{c}$ for $[\Pi_{c}]$ can be constructed in terms of $\sigma_{c}$.  
\end{remark}

\begin{remark}
In general, $\mathrm{N}_{\lambda,[\Pi]}$ would not be normal in $\mathrm{N}(r)$, and following remark \ref{remark:complement}, the same is true for its complement $\mathrm{N}_{\lambda,[\Pi]}^{c}$. As an example where neither subgroup is normal, consider $r=4$, the partition $4=2+2$, and the permutation $\sigma=(123)$.
\end{remark}

\begin{lemma}\label{lemma:factorization}
Let $\lambda$ be a nontrivial partition of $r$. Every matrix $g\in \GL(r,\CC)$ can be factored in the form
\[
g = L\Pi P 
\]
with $L\in \mathrm{N}(r)$, $P\in \mathrm{P}_{\lambda}$, and $\Pi$ in a fixed class $[\Pi]\in \mathrm{W}(r)/ \mathrm{W}_{\lambda}$. The class of factors $[L]$ contains a unique element $L$ in $\mathrm{N}_{\lambda,[\Pi]}^{c}$.
\end{lemma}

\begin{proof}
Existence of factorizations $L\Pi P$ follow from the standard Bruhat decomposition
\[
\GL(r,\CC)=\bigsqcup_{\Pi\in \mathrm{W}(r)} \mathrm{N}(r)\cdot \Pi \cdot \mathrm{B}(r).
\]
If $g=L'\Pi' P'$, then $P P'^{-1}=\Pi^{-1}(L^{-1}L')\Pi'$, and it follows that all nonzero entries of $\Pi^{-1}\Pi'$ are nonzero entries of $P P'^{-1}\in \mathrm{P}_{\lambda}$. Hence $\Pi^{-1}\Pi' \in W^{\lambda}$.
To conclude, observe that if $g = L' \Pi P'$, then $L^{-1}L' =\Pi P\left(P'\right)^{-1} \Pi^{-1}$. Hence $L^{-1}L' \in \mathrm{N}_{\lambda,[\Pi]}$. In particular, since from lemma \ref{lemma:semidirect}, every choice of $L$ would factor as $L' L''$, with $L'\in \mathrm{N}_{\lambda,[\Pi]}^{c}$ and $L'' \in  \mathrm{N}_{\lambda,[\Pi]}$, it readliy follows that such $L'$ is independent of the choice of $L$. Hence, there is a unique representative of $L$ lying in $\mathrm{N}_{\lambda,[\Pi]}^{c}$.
\end{proof}

\begin{remark}\label{rem:Bruhat}
As a consequence of lemmas \ref{lemma:semidirect} and \ref{lemma:factorization}, we conclude the generalized Bruhat decomposition for $\GL(r,\CC)$
\[
\GL(r,\CC)=\bigsqcup_{[\Pi]\in \mathrm{W}(r)/\mathrm{W}_{\lambda}} \mathrm{N}_{\lambda,[\Pi]}^{c}\cdot \Pi \cdot\mathrm{P}_{\lambda}.
\]
Consequently, the generalized flag manifold $\cF_{\lambda} \cong \GL(r,\CC)/\mathrm{P}_{\lambda}$ admits a stratification 
\[
\cF_{\lambda} = \displaystyle\bigsqcup_{[\Pi]\in \mathrm{W}(r)/\mathrm{W}_{\lambda}} \cF_{\lambda,[\Pi]}
\]
The strata $\cF_{\lambda,[\Pi]} \cong \mathrm{N}_{\lambda,[\Pi]}^{c} \cdot \Pi \cdot \mathrm{P}_{\lambda}/\mathrm{P}_{\lambda}$ are the so-called \emph{Bruhat cells}. The largest (open) cell, of dimension $\left(r^{2}-(i_{1}^{2}+\dots+i_{s}^{2})\right)/2$, is given by the class of the permutation 
\[
\Pi_{0} = (r,1)(r-1,2),\dots(\lfloor (r +1)/2\rfloor +1, \lfloor r/2\rfloor). 
\]
In turn, it is easy to see that the group $\mathrm{N}_{\lambda,[\Pi_{0}]}^{c}$ actually coincides with $\mathrm{N}_{\lambda_{0}}$, the unipotent radical of the parabolic subgroup associated to the partition 
\[
\lambda_{0} = i_{s}+\dots + i_{1}.
\]
\end{remark}

\begin{remark}\label{rem:flag-action}
Any given element in $\cF_{\lambda}$ is obtained as $g \cdot F_{\lambda}$, by means of the left $\GL(r,\CC)$-action on $F_{\lambda}$, defined as
\[
g \cdot V_{j} =  g(V_{j}), \qquad g\in\GL(r,\CC),
\]
providing the correspondence with the homogeneous space model of $\cF_{\lambda}$. In particular, $F_{\lambda}$ corresponds to the class of the identity $[\mathrm{Id}]$, equal to its own Bruhat stratum (a 0-cell). Therefore, from lemma \ref{lemma:factorization}, we can detect the stratum $g \cdot F_{\lambda}$ belongs to in terms of the Bruhat decomposition of $g$. 
\end{remark}

If we now fix a class $[\Pi]\in  \mathrm{W}(r)/\mathrm{W}_{\lambda}$ and let the $[\Pi]$-stratum of $\GL(r,\CC)$ act on $F_{\lambda}$, we conclude the following corollary.

\begin{corollary}\label{cor:action}
Each Bruhat cell $\cF_{\lambda,[\Pi]}\subset \cF_{\lambda}$ is a torsor for the group $\mathrm{N}_{\lambda,[\Pi]}^{c}$.
\end{corollary}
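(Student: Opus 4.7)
The plan is to prove transitivity and freeness separately, both by reducing to the content of Lemmas \ref{lemma:semidirect} and \ref{lemma:factorization}. By Remark \ref{rem:flag-action}, the stabilizer of the standard flag $F_{\lambda}$ in $\GL(r,\CC)$ is exactly $\mathrm{P}_{\lambda}$, and the Bruhat cell $\cF_{\lambda,[\Pi]}$ is the image of the stratum $\mathrm{N}_{\lambda,[\Pi]}^{c}\cdot\Pi\cdot\mathrm{P}_{\lambda}$ under the orbit map $g\mapsto g\cdot F_{\lambda}$. Thus every element of $\cF_{\lambda,[\Pi]}$ can be written in the form $L\Pi\cdot F_{\lambda}$ for some $L\in\mathrm{N}_{\lambda,[\Pi]}^{c}$, which already exhibits $\mathrm{N}_{\lambda,[\Pi]}^{c}$ as acting transitively on $\cF_{\lambda,[\Pi]}$ via left multiplication, with basepoint $\Pi\cdot F_{\lambda}$.

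For freeness, I would take $L_{1},L_{2}\in\mathrm{N}_{\lambda,[\Pi]}^{c}$ with $L_{1}\Pi\cdot F_{\lambda}=L_{2}\Pi\cdot F_{\lambda}$. Then $\Pi^{-1}L_{2}^{-1}L_{1}\Pi$ fixes $F_{\lambda}$, so it lies in $\mathrm{P}_{\lambda}$, and therefore $L_{2}^{-1}L_{1}\in\Ad(\Pi)(\mathrm{P}_{\lambda})\cap\mathrm{N}(r)=\mathrm{N}_{\lambda,[\Pi]}$. On the other hand, $\mathrm{N}_{\lambda,[\Pi]}^{c}$ is a subgroup of $\mathrm{N}(r)$ by Lemma \ref{lemma:semidirect}, so $L_{2}^{-1}L_{1}\in\mathrm{N}_{\lambda,[\Pi]}^{c}$ as well. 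The trivial intersection $\mathrm{N}_{\lambda,[\Pi]}^{c}\cap\mathrm{N}_{\lambda,[\Pi]}=\{\mathrm{Id}\}$ from Lemma \ref{lemma:semidirect} then forces $L_{1}=L_{2}$, which concludes the argument.

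There is no real obstacle here; the only subtle point is making sure one uses both pieces of Lemma \ref{lemma:semidirect}, namely that $\mathrm{N}_{\lambda,[\Pi]}^{c}$ is actually a subgroup (so that $L_{2}^{-1}L_{1}$ remains in it) and that it intersects $\mathrm{N}_{\lambda,[\Pi]}$ trivially. Alternatively, one could invoke directly the uniqueness part of Lemma \ref{lemma:factorization} applied to the element $L_{2}^{-1}L_{1}\Pi\in\mathrm{N}_{\lambda,[\Pi]}^{c}\cdot\Pi$, comparing it with the trivial factorization $\mathrm{Id}\cdot\Pi\cdot\mathrm{Id}$, yielding the same conclusion.
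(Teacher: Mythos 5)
Your proof is correct and follows essentially the same route as the paper, which deduces the corollary by letting the $[\Pi]$-stratum $\mathrm{N}_{\lambda,[\Pi]}^{c}\cdot\Pi\cdot\mathrm{P}_{\lambda}$ act on $F_{\lambda}$ and invoking lemmas \ref{lemma:semidirect} and \ref{lemma:factorization}. Your explicit split into transitivity (orbit description of the cell) and freeness (stabilizer computation forcing $L_{2}^{-1}L_{1}\in\mathrm{N}_{\lambda,[\Pi]}^{c}\cap\mathrm{N}_{\lambda,[\Pi]}=\{\mathrm{Id}\}$) is just a fleshed-out version of the uniqueness statement in lemma \ref{lemma:factorization}, as you yourself note in your closing alternative.
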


\begin{definition}\label{def:Bruhat-coord}
The \emph{Bruhat coordinates} of the Bruhat cell $\cF_{\lambda,[\Pi]}\subset \cF_{\lambda}$, for any given $[\Pi]\in \mathrm{W}(r)/\mathrm{W}_{\lambda}$, are the holomorphic coordinates given by the off-diagonal nonzero entries in the group $\mathrm{N}_{\lambda,[\Pi]}^{c}$. By a slight abuse of notation, these will be denoted by $\left(L, [\Pi]\right)$, or simply by $L$, where $L\in \mathrm{N}_{\lambda,[\Pi]}^{c}$.
\end{definition}

We will denote the complete flag manifold $\cF_{\lambda_{r}}$ by $\cF_{r}$, and similarly for its Bruhat cells $\cF_{r,\Pi}$, $\Pi\in \mathrm{W}(r)$.

\section{Birkhoff-Grothendieck splittings over the Riemann sphere}\label{sec:bundles}

The study of the notion of stability and semi-stability of holomorphic vector bundles $E \to \Sigma$ on compact Riemann surfaces leads naturally to the introduction of the \emph{Harder-Narasimhan filtrations}, i.e., Jordan-H\"older filtrations by subbundles of $E$
\[
 E = E_{1} \supset E_{2} \supset \dots \supset E_{s} \supset 0
\]
such that each quotient $E_{k}/E_{k+1}$ is semi-stable (letting $E_{s+1} = 0$), and moreover
\[
\mu\left(E_{s}/E_{s + 1}\right) > \dots > \mu\left(E_{1}/E_{2}\right),
\] 
where $\mu(E) = \deg(E)/\textrm{rank}(E)$ is the slope of $E$. 
When $\Sigma = \CC\PP^{1}$, a vector bundle is semi-stable if and only if it is a twist of the trivial bundle, and consequently, the Birkhoff-Grothendieck splitting of a vector bundle $E$ is intimately related to its Harder-Narasimhan filtration. In more concrete terms, assume that 
\[
E \cong \mathcal{O}(n_{1})^{i_{1}}\oplus \mathcal{O}(n_{2})^{i_{2}}\oplus \dots\oplus \mathcal{O}(n_{s})^{i_{s}}
\]
in such a way that $n_{1} < \dots < n_{s}$. Then, it can be verified that, although the Birkhoff-Grothendieck splitting of $E$ is not unique, for every $k =1, \dots, s$, there is a \emph{unique} subbundle $E_{k}\subset E$ such that
\[
E_{k} \cong \mathcal{O}(n_{k})^{i_{k}}\oplus \mathcal{O}(n_{2})^{i_{2}}\oplus \dots\oplus \mathcal{O}(n_{s})^{i_{s}}
\]
and consequently $E_{k}/E_{k+1} \cong \mathcal{O}(n_{k})^{i_{k}}$ is semi-stable, with slope equal to $n_{k}$. In particular, a generalized flag $F_{z}$ gets induced on every fiber $E|_{z}$. Its multiplicity type is determined by the partition  $\lambda_{N}$ given as $i_{1} + \dots + i_{s} = r$. We will denote its stabilizing parabolic subgroup by $\mathrm{P}\left(F_{z}\right)\subset \mathrm{GL}\left(E|_{z}\right)$.

The group $\Aut(E)$ admits a normal subgroup $\mathcal{N}\subset \Aut(E)$ consisting as those automorphisms acting as the identity on every quotient $E_{k}/E_{k+1}$, $k = 1,\dots,s$. In analogy to the Levi decompositions of parabolic subgroups of $\mathrm{GL}(V)$, there exists semi-direct product decompositions
\[
\Aut (E) = \mathcal{N} \rtimes \mathcal{D}
\]
where the subgroups $\mathcal{D}$ are defined for every choice of isomorphism $E_{N}$, and correspond to the stabilizers every summand $\mathcal{O}(n_{k})^{i_{k}}$.  As a corollary, we conclude
\begin{corollary}
Every isomorphism $E\cong E_{N}$ determines a splitting of the short exact sequence 
\[
e\to \mathcal{N}\to \Aut(E) \to \Aut(E)/\mathcal{N} \to e
\] 
in terms of the stabilizing subgroup of the splitting $E_{N}$.
Moreover, the set of all Birkhoff-Grothendieck splittings of a vector bundle $E \to \CC\PP^{1}$ is a torsor for the subgroup $\mathcal{N}\subset \Aut(E)$.
\end{corollary}

From the coefficient matrix $N$ and the partition $\lambda_{N}$, we can induce a parabolic subgroup  $\mathrm{P}_{\lambda_{N}}\subset\mathrm{GL}(r,\CC)$ of block lower-triangular matrices as before.  To simplify notations, we will denote such parabolic subgroup by $\mathrm{P}_{N}$, and similarly for $\mathrm{N}_{N}$ and $\mathrm{D}_{N}$. 

Now, if an arbitrary choice of isomorphism $E \cong E_{N}$ is made, the group $\Aut(E)$ can then be explicitly described on an affine trivialization in terms of matrix-valued polynomials. By definition, an automorphism of $E_{N}$ is then equivalent to a pair of holomorphic maps $\{g_{i}:\CC_{i}\to \GL(r,\CC)\}_{i=0,1}$ satisfying 
\[
g_{0}=z^{N}g_{1}z^{-N}\qquad \text{on}\;\; \CC^{*},
\]
thus, it readily follows that an automorphism of $E_{N}$ is fully determined by a matrix-valued polynomial 
\[
g(z)=\sum_{l=0}^{n_{s}-n_{1}}P_{l}z^{l},
\]
with $P_{0}\in \mathrm{P}_{N}$, and for each $l>0$, $P_{l}\in\mathfrak{n}_{N}$ has zero $jk$-blocks whenever $n_{j}-n_{k} < l$. Moreover, it readily follows that for every $z\in\CC\PP^{1}$, the choice of trivialization induces an isomorphism 
\[
\Aut(E)|_{z} \cong \mathrm{P}_{N}
\]
in such a way that the factorizations $\Aut (E)|_{z} =\mathcal{N}|_{z} \rtimes \mathcal{D}|_{z}$ correspond to the Levi decompositions of $\mathrm{P}_{N}$. In particular, for every $z\in\CC\PP^{1}$,
\[
\mathcal{N}|_{z}\cong \mathrm{N}_{N},\qquad\mathcal{D}|_{z}\cong \mathrm{D}_{N}. 
\]
The previous facts can be stated in a more invariant manner, given in the following lemma, whose proof is now straightforward. 

\begin{lemma}\label{lemma:restriction}
 Assume that $s \geq 2$. For every $z\in \CC\PP^{1}$, the group $\Aut (E)|_{z}$ coincides with the parabolic subgroup $\mathrm{P}\left(F_{z}\right) \subset \mathrm{GL}\left(E|_{z}\right)$ stabilizing the flag $F_{z}$, in such a way that $\mathcal{N}|_{z} = \mathrm{N}\left(F_{z}\right)$. Consequently, every choice of a Birkhoff-Grothendieck splitting of $E$ induces simultaneous choices of Levi decompositions 
\[
\Aut(E)|_{z} = \mathrm{P}(F_{z})\cong\mathrm{N}\left(F_{z}\right)\rtimes \mathcal{D}|_{z}.
\]  
\end{lemma}

\begin{remark}
In the special case when $E$ is isomorphic to a twist of the trivial bundle, that is, when $s = 1$ and $r = i_{1}$ is the trivial partition, the previous structures degenerate, and $\Aut(E) \cong \GL(r,\CC)$. Since the latter is a rather special case, we will exclude it from our considerations. We will assume henceforth that $s \geq 2$, i.e., the associated bundle $\End(E)$ \emph{is nontrivial}.
\end{remark}

From now on, we will concentrate exclusively in the case $E = E_{N}$, or said it differently, we will choose once and for all an isomorphism $E \cong E_{N}$. In particular, this determines an isomorphism $\mathcal{D} \cong \mathrm{D}_{N}$.

Let us consider the numbers $d_{l} = n_{l+1}-n_{l}$, for $1 \leq l \leq s-1$. Then, for any $s \geq j > k \geq 1$, the dimension of the subspace $\Aut\left( E_{N}\right)_{jk}\subset \Aut\left( E_{N}\right)$, corresponding to the $jk$-th block in the group $\Aut\left( E_{N}\right)$, may be expressed as 
\begin{eqnarray*}
\dim \Aut\left( E_{N}\right)_{jk} & = & i_{j}i_{k}\left(n_{j} - n_{k} +1\right)\\
& = & i_{j}i_{k}\left(d_{k}+d_{k+1}+\dots+d_{j-1}+1\right)
\end{eqnarray*}

We will require the introduction of a collection of subgroups of the group $\mathrm{N}_{\lambda}$ of a special kind. 
Let us consider, for every $1\leq l \leq s-1$, the $s$-cycle $\sigma_{l}\in\mathrm{S}_{s}$
given as 
\[
\sigma_{l}(k) = \left\{
\begin{array}{lr}
k + l & \text{if}\quad k \leq s - l,\\\\
k - s + l & \text{if}\quad k >  s - l.
\end{array}
\right.
\]
Each $\sigma_{l}$ acts on  the partition $\lambda$, inducing a new partition
\[
\lambda_{l}=i_{\sigma_{l}(1)} + \dots + i_{\sigma_{l}(s)}.
\]
We can also induce a special $r$-cycle $\tau_{l}\in\mathrm{S}_{r}$, defined as 
 \[
\tau_{l}(k) = 
\left\{
\begin{array}{lr}
k + i_{1} + \dots + i_{l} & \text{if}\quad k \leq i_{l + 1} + \dots + i_{s},\\\\
k - \left(i_{l + 1} + \dots + i_{s}\right) & \text{if}\quad k > i_{l + 1} + \dots + i_{s}.
\end{array}
\right.
\]
Denote by $\Pi_{l}$ the corresponding permutation matrix of $\tau_{l}$, and let us consider the associated groups $\mathrm{N}_{\lambda_{l},[\Pi_{l}]}^{c}$ (recall lemma \ref{lemma:semidirect} and remark \ref{remark:complement}). 
The next lemma unmasks the structure of such groups.

\begin{lemma}\label{lemma:abelian}
The groups $\mathrm{N}_{\lambda_{l},[\Pi_{l}]}^{c}$, $1 \leq l \leq s-1$, are the abelian subgroups of $\mathrm{N}_{\lambda}\subset \mathrm{N}(r)$ determined by a single lower-left block of size $(i_{l+1} + \dots + i_{s})\times(i_{1} + \dots + i_{l})$.
\end{lemma}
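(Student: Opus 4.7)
The plan is to apply the characterization of $\mathrm{N}_{\lambda,[\Pi]}^{c}$ from lemma \ref{lemma:semidirect}, specialized to the partition $\lambda_{l}$ and the permutation $\tau_{l}$, and then to carry out a case analysis on sub-diagonal positions $(j,k)$. According to that characterization, an entry $B_{jk}$ with $j>k$ of an element $B\in \mathrm{N}_{\lambda_{l},[\Pi_{l}]}^{c}$ is allowed to be nonzero if and only if $\tau_{l}^{-1}(j)<\tau_{l}^{-1}(k)$ and the pair $\{\tau_{l}^{-1}(j),\tau_{l}^{-1}(k)\}$ is not contained in a single block of the partition $\lambda_{l}$.

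First I would invert $\tau_{l}$ explicitly. Introducing the two segments $A=\{1,\dots,i_{1}+\dots+i_{l}\}$ and $B=\{i_{1}+\dots+i_{l}+1,\dots,r\}$, one reads off directly from the definition of $\tau_{l}$ that $\tau_{l}^{-1}$ acts order-preservingly on each of $A$ and $B$, sending $A$ onto $\{i_{l+1}+\dots+i_{s}+1,\dots,r\}$ by $j\mapsto j+(i_{l+1}+\dots+i_{s})$ and $B$ onto $\{1,\dots,i_{l+1}+\dots+i_{s}\}$ by $j\mapsto j-(i_{1}+\dots+i_{l})$. A direct count using the block sizes $i_{l+1},\dots,i_{s},i_{1},\dots,i_{l}$ of $\lambda_{l}$ shows that its first $s-l$ blocks partition $\tau_{l}^{-1}(B)$ and the last $l$ partition $\tau_{l}^{-1}(A)$, so no $\lambda_{l}$-block crosses the cut at position $i_{l+1}+\dots+i_{s}$.

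The case analysis is then straightforward. If both $j,k$ lie in $A$ or both lie in $B$, the order-preservation of $\tau_{l}^{-1}$ on that segment, together with $j>k$, yields $\tau_{l}^{-1}(j)>\tau_{l}^{-1}(k)$, which forces $B_{jk}=0$. The case $j\in A$, $k\in B$ is vacuous since $A$ precedes $B$. If $j\in B$ and $k\in A$, then $\tau_{l}^{-1}(j)$ lies in the initial segment $\{1,\dots,i_{l+1}+\dots+i_{s}\}$ while $\tau_{l}^{-1}(k)$ lies strictly beyond it, so $\tau_{l}^{-1}(j)<\tau_{l}^{-1}(k)$ holds automatically, and the two points are separated by a cut that no $\lambda_{l}$-block crosses. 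Hence $B_{jk}$ is free, and the nonzero off-diagonal entries of $\mathrm{N}_{\lambda_{l},[\Pi_{l}]}^{c}$ fill precisely the rectangle $B\times A$ of size $(i_{l+1}+\dots+i_{s})\times(i_{1}+\dots+i_{l})$. Since this rectangle lies strictly below the $\lambda$-block diagonal, the inclusion $\mathrm{N}_{\lambda_{l},[\Pi_{l}]}^{c}\subset \mathrm{N}_{\lambda}$ follows.

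Abelianness is then immediate from the block structure: writing any element as $I+X$ with $X$ supported in $B\times A$, the $(j,k)$-entry of $X_{1}X_{2}$ equals $\sum_{m}X_{1,jm}X_{2,mk}$, and this vanishes since $X_{1,jm}\neq 0$ forces $m\in A$ while $X_{2,mk}\neq 0$ forces $m\in B$. I expect no genuine obstacle; the only place careful bookkeeping is required is in inverting $\tau_{l}$, where the two distinct cuts $i_{1}+\dots+i_{l}$ on the output side and $i_{l+1}+\dots+i_{s}$ on the input side must be kept straight.
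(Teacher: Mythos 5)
Your proof is correct and follows essentially the same route as the paper's: both apply the defining characterization of $\mathrm{N}_{\lambda_{l},[\Pi_{l}]}^{c}$ from lemma \ref{lemma:semidirect}, analyze sub-diagonal positions $(j,k)$ relative to the cut at $i_{1}+\dots+i_{l}$ using the fact that $\tau_{l}$ is an order-preserving shift on each segment, conclude that the free entries fill exactly the lower-left rectangle, and deduce abelianness from the vanishing of products of matrices supported in that rectangle. Your explicit inversion of $\tau_{l}$ and the clean three-case split make the bookkeeping slightly more transparent than the paper's, but the underlying argument is identical.
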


\begin{proof}
By definition, the group $\mathrm{N}_{\lambda_{l},[\Pi_{l}]}^{c}$ has a zero $jk$-th entry ($j>k$) if $\tau^{-1}_{l}(j)>\tau^{-1}_{l}(k)$, or if $\tau^{-1}_{l}(j) < \tau^{-1}_{l}(k)$ when $\{\tau^{-1}_{l}(j), \tau^{-1}_{l}(k)\}$ belong to the same partition interval in $\lambda_{l}$. It is straightforward  to see from the definition of $\tau_{l}$ (a shift by $i_{1} + \dots + i_{l}$ modulo $r$) that the second possibility never occurs, since $\Ad\left(\Pi_{l}\right)\left(\mathrm{D}_{\lambda_{l}}\right) = \mathrm{D}_{\lambda}$. Assume that a pair $j > k$ moreover satisfies that either $i_{1} < j \leq i_{1}+ \dots + i_{l}$ and $k \leq i_{1} + \dots + i_{l-1}$, or $i_{1}+ \dots + i_{l} < k \leq i_{1} + \dots + i_{s-1}$ and $i_{1} + \dots + i_{l} < j$. The permutation 
$\tau_{l}$ has been defined in such a way that then $\tau^{-1}_{l}(j) > \tau^{-1}_{l}(k)$ in both cases, and all such terms vanish. Moreover, assume that $k \leq  i_{1}+ \dots + i_{l} < j$. Then $\tau^{-1}_{l}(j) < \tau^{-1}_{l}(k)$, and there is no vanishing constraint for any of these entries. Therefore, $\mathrm{N}_{\lambda_{l}, [\Pi_{l}]}^{c}$ is determined precisely by the single  lower-left block whose entries' indices satisfy $k \leq i_{1}+ \dots + i_{l} < j$.

Now, we can work with the block structure of $\mathrm{N}_{\lambda_{l},[\Pi_{l}]}^{c}$, with respect to the partition $\lambda$. Restated this way, the block structure is determined by decreeing the $jk$-th block to be equal to 0 if either $j \leq l$, or $l < k$ (figure \ref{subgroups}). 
Therefore, for a given $1 \leq l \leq s-1$, let us assume that $j > k$ satisfy $j > l \geq k$. If $n$ and $n'$ are arbitrary elements in $\mathrm{N}_{\lambda_{l},[\Pi_{l}]}^{c}$, then the $jk$-th block of $nn'$ satisfies
\[
(n n')_{jk} = n_{jk} + n'_{jk} = (n'n)_{jk},
\]
that is, $\mathrm{N}_{\lambda_{l},[\Pi_{l}]}^{c}$ is abelian. However, the subgroups $\mathrm{N}_{\lambda_{l},[\Pi_{l}]}^{c}$ do not pairwise commute. In general, 
\begin{equation}\label{eq:commutator}
\left[\mathrm{N}_{\lambda_{l_{1}},\left[\Pi_{l_{1}}\right]}^{c}, \mathrm{N}_{\lambda_{l_{2}},\left[\Pi_{l_{2}}\right]}^{c}\right] = \mathrm{N}_{\lambda_{l_{1}},\left[\Pi_{l_{1}}\right]}^{c}\cap \mathrm{N}_{\lambda_{l_{2}},\left[\Pi_{l_{2}}\right]}^{c}\qquad \text{if}\quad l_{1}\neq l_{2}.
\end{equation}
\end{proof}

The structure of the groups of block-unipotent matrices $\mathrm{N}_{\lambda_{l},[\Pi_{l}]}^{c}$ is sketched in figure \ref{subgroups} for the 
values $i=1,2,\dots,s-1$.

\begin{figure}[!htb]
\[
\scriptstyle
\begin{pmatrix}
I_{1} & & & & \\
* & I_{2} & &  0 &  \\
* & 0 & \ddots  & & \\
\vdots & \vdots & & I_{s-1} & \\
* & 0 & \hdots & 0 & I_{s} 
\end{pmatrix},
\begin{pmatrix}
I_{1} & & & & \\
0 & I_{2} & & 0 &\\
* & * & \ddots & & \\
\vdots & \vdots & & I_{s-1} & \\
* & * & \hdots & 0 & I_{s}
\end{pmatrix},
\dots, 
\begin{pmatrix}
I_{1} & & & & \\
0 & I_{2} & &  0 &  \\
0 & 0 & \ddots  & & \\
\vdots & \vdots & & I_{s-1} & \\
* & * & \hdots & * & I_{s} 
\end{pmatrix}
\]
\caption{Block structure of the groups $N_{\lambda_{l},[\Pi_{l}]}^{c}$, where $I_{j}=\textrm{Id}_{i_{j}\times i_{j}}$.}\label{subgroups}
\end{figure}

Recall from remark \ref{rem:Bruhat} that $\mathrm{N}_{\lambda} = \mathrm{N}_{\lambda_{0},[\Pi_{0}]}^{c}$. The next result is the heart of our structural characterization of the group $\Aut \left(E_{N}\right)$ (see remark \ref{rem:geom-action}).

\begin{theorem}[Geometric factorization of automorphisms]\label{theo:factorization}
If $s>2$, the group of bundle automorphisms $\Aut(E_{N})$ can be expressed as a semidirect product 
\begin{equation}\label{eq:semidirect}
\Aut(E_{N}) \cong  \left( \left( \mathrm{G}_{1} \cdot \mathrm{G}_{2}\cdot \dots \cdot \mathrm{G}_{s-1}\right) \rtimes \mathrm{N}_{N}\right) \rtimes \mathrm{D}_{N},
\end{equation}
where for each $1\leq l \leq s-1$, the group $\mathrm{G}_{l}$ is isomorphic to the direct product
\begin{equation}\label{eq:semidirect2}
\underbrace{\mathrm{N}_{\lambda_{l},[\Pi_{l}]}^{c}\times\dots\times \mathrm{N}_{\lambda_{l},[\Pi_{l}]}^{c}}_{\text{$d_{l}$ times}}.
\end{equation}
and correspons to restrictions to the fibers over $d_{l}$ different points in $\CC\PP^{1}$.
In the special case $s=2$, there is an isomorphism
\begin{equation}\label{eq:semidirect s=2}
\Aut(E_{N}) \cong \left( \underbrace{ \mathrm{N}_{N} \times  \dots \times \mathrm{N}_{N}}_{\text{$d_{1} + 1$ times}}\right) \rtimes \mathrm{D}_{N},
\end{equation}
\end{theorem}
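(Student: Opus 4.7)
The plan is to prove the factorization in three stages: first, strip off $\mathrm{D}_N$ and $\mathrm{N}_N$ via evaluation at $z=0$; second, realize each $\mathrm{G}_{l}$ as a polynomial slice of $\Aut(E_N)$ determined by a carefully chosen partition of admissible exponents at each block; and third, exploit a key cross-term vanishing to show that the ordered product $g_1(z)\cdots g_{s-1}(z)$ collapses to a linear sum in its factors, yielding both uniqueness and surjectivity of the factorization. The evaluation $\mathrm{ev}_0:g(z)\mapsto g(0)\in\mathrm{P}_N=\mathrm{N}_N\rtimes\mathrm{D}_N$ is a surjective homomorphism, with $\mathrm{D}_N$ and $\mathrm{N}_N$ embedded as constant automorphisms; its kernel $K=\{g:g(0)=I\}$ is normal, so $\Aut(E_N)=(K\rtimes\mathrm{N}_N)\rtimes\mathrm{D}_N$ produces the outer semidirect factors, reducing the theorem to the identification $K=\mathrm{G}_1\cdot\mathrm{G}_2\cdots\mathrm{G}_{s-1}$.

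To construct the $\mathrm{G}_{l}$'s, I observe that at each off-diagonal $(j,k)$-block with block-indices $1\leq k<j\leq s$, the admissible positive polynomial degrees form the range $\{1,\dots,n_j-n_k\}=\{1,\dots,d_k+d_{k+1}+\dots+d_{j-1}\}$, and I partition it into $d_l$-length consecutive intervals $I_l^{(j,k)}=(d_k+\dots+d_{l-1},\,d_k+\dots+d_l]$, indexed by $l\in\{k,\dots,j-1\}$. Then $\mathrm{G}_{l}\subset\Aut(E_N)$ is defined as the set of polynomial automorphisms $I+M_l(z)$ with $M_l(z)$ supported in the type-$l$ blocks $(j,k)$ satisfying $k\leq l<j$, and with the polynomial at the $(j,k)$-block taking degrees only in $I_l^{(j,k)}$. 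Lemma~\ref{lemma:abelian} ensures that $\mathrm{N}_{\lambda_l,[\Pi_l]}^c$ is abelian with vanishing Lie-algebra self-product, which forces $\mathrm{G}_{l}$ itself to be abelian; and evaluation at any $d_l$ distinct nonzero points yields a bijection $\mathrm{G}_{l}\cong(\mathrm{N}_{\lambda_l,[\Pi_l]}^c)^{d_l}$ via a shifted Vandermonde argument, giving the \emph{restriction to fibers} interpretation in the statement.

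The key computational observation is: for $l_1<l_2$ and $Q^{(l_i)}\in\mathfrak{n}_{\lambda_{l_i},[\Pi_{l_i}]}^c$, the matrix product $Q^{(l_1)}Q^{(l_2)}=0$, because the middle summation index would have to lie in block $\leq l_1$ (as a column of $Q^{(l_1)}$) and in block $>l_2$ (as a row of $Q^{(l_2)}$), which is impossible. Hence the ordered product collapses to $g_1(z)g_2(z)\cdots g_{s-1}(z)=I+\sum_l M_l(z)$, a linear sum. Since the intervals $\{I_l^{(j,k)}\}_l$ partition $\{1,\dots,n_j-n_k\}$ at each block, the coefficient-reading map $\mathrm{G}_1\times\cdots\times\mathrm{G}_{s-1}\to K$ is bijective, giving both uniqueness and surjectivity. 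Conjugation by $\mathrm{N}_N$ preserves $g(0)=I$ and hence $K$, completing the inner semidirect structure. For $s=2$, the situation collapses further because $\mathrm{N}_{\lambda_1,[\Pi_1]}^c=\mathrm{N}_N$ coincides with the full (abelian) block-unipotent, so $\mathrm{G}_1$ and $\mathrm{N}_N$ commute and fuse into a direct product $\mathrm{N}_N^{d_1+1}$.

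I expect the principal technical obstacle to be the bookkeeping in the uniqueness argument: since the intervals $I_l^{(j,k)}$ depend on the block-column index $k$, the exponent ranges assigned to type-$l$ differ across $(j,k)$-blocks, and one must verify carefully that the induced linear map from the $\mathrm{G}_{l}$-parameters to the polynomial coefficients at every block is non-singular. A secondary delicate point is to confirm that conjugation by $\mathrm{N}_N$ respects the prescribed exponent-interval allocation across positions, which ultimately rests on the commutator relation~\eqref{eq:commutator} among the $\mathrm{N}_{\lambda_l,[\Pi_l]}^c$'s established in Lemma~\ref{lemma:abelian}.
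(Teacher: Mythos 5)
Your proof is correct, and its skeleton coincides with the paper's: the diagonal blocks give $\mathrm{D}_{N}$, the constants give $\mathrm{N}_{N}$, and the strictly block-lower polynomial part is graded into pieces $\mathrm{G}_{l}$ whose block support is that of $\mathrm{N}_{\lambda_{l},[\Pi_{l}]}^{c}$ and whose entries carry $d_{l}$ parameters each, so that $\mathrm{G}_{l}\cong\bigl(\mathrm{N}_{\lambda_{l},[\Pi_{l}]}^{c}\bigr)^{d_{l}}$ by Lemma \ref{lemma:abelian}. Where you genuinely diverge is in how the group-theoretic assertions are certified, and your mechanisms are tighter than the paper's at exactly the two places the paper leaves as assertions. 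First, the paper works with an \emph{arbitrary} vector-space decomposition $\Aut(E_{N})_{jk}=V^{0}_{jk}\oplus\bigl(\bigoplus_{l=k}^{j-1}V^{l}_{jk}\bigr)$ and states that normality of $\mathrm{G}_{1}\cdots\mathrm{G}_{s-1}$ is ``straightforward to verify''; you instead fix the concrete monomial-interval choice $I^{(j,k)}_{l}$ and derive normality and the outer semidirect structure for free from the evaluation homomorphism $\mathrm{ev}_{0}$, since $K=\ker(\mathrm{ev}_{0})$ is automatically normal and $\Aut(E_{N})=K\rtimes\mathrm{P}_{N}=(K\rtimes\mathrm{N}_{N})\rtimes\mathrm{D}_{N}$. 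Second, your cross-term vanishing $Q^{(l_{1})}Q^{(l_{2})}=0$ for $l_{1}<l_{2}$ collapses the ordered product to $I+\sum_{l}M_{l}(z)$, turning the identification $K=\mathrm{G}_{1}\cdots\mathrm{G}_{s-1}$ into an explicit bijection with uniqueness of factorization, rather than a dimension count; this also explains structurally why the paper's commutator relation \eqref{eq:commutator} holds. What the paper's looser formulation buys is flexibility: keeping the $V^{l}_{jk}$ arbitrary is precisely what permits the Lagrange-polynomial choice (localized at the marked points and vanishing at $z_{0}=0$) in the proof of Theorem \ref{theo:normalization}, whereas your monomial choice plus the shifted-Vandermonde evaluation recovers the ``restriction to fibers'' interpretation immediately; since the statement of Theorem \ref{theo:factorization} is choice-independent, nothing is lost either way. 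Finally, the second ``delicate point'' you flag is a non-issue in your own setup: the semidirect product \eqref{eq:semidirect} only requires $\mathrm{N}_{N}$ to normalize the product group $K$ as a whole, which is automatic because $K$ is the kernel of $\mathrm{ev}_{0}$; conjugation by $\mathrm{N}_{N}$ need not preserve each exponent-interval allocation individually (although, as it happens, it does preserve each type-$l$ block support and the degrees).
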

\begin{proof}
Assume that $s > 2$, and consider each of the blocks $\Aut\left(E_{N}\right)_{jk}$. Those for which $j=k$ conform the $\mathrm{D}_{N}$-factor in $\Aut\left(E_{N}\right)$. When $j > k$, the blocks have the structure of a vector space of matrix-valued polynomials. Let us consider an arbitrary vector space decomposition
\[
\Aut\left(E_{N}\right)_{jk}=V^{0}_{jk}\oplus\left(\bigoplus_{l=k}^{j-1} V^{l}_{jk}\right)
\]
where the spaces $V^{0}_{jk}$ are blocks with 1-dimensional entries, and constrained to form a group isomorphic to $\mathrm{N}_{N}$ (such choices obviously exist), and for every $l>0$, each block entry of $V^{l}_{jk}$ is a subspace of dimension $d_{l}$.

For each $0 \leq l \leq s-1$, consider the subspace $\mathrm{G}_{l}$ of $\Aut\left(E_{N}\right)$ consisting of the block-unipotent elements whose $jk$-th block belongs to the subspace $V^{l}_{jk}$, 
or is trivial when $l\geq j$ or $l <k$. Then, by construction, each subspace $V^{l}_{jk}$ belongs to a unique $\mathrm{G}_{l}$.  In particular 
\[
\mathrm{G}_{0} \cong \mathrm{N}_{\lambda_{0},[\Pi_{0}]}^{c} = \mathrm{N}_{N}.
\]
By definition, each space $\mathrm{G}_{l}$, $l>0$, has the structure of a vector space of dimension $d_{l}\cdot \dim \mathrm{N}_{\lambda_{l},[\Pi_{l}]}^{c}$, and is in fact isomorphic to the direct product of abelian Lie groups
\[
\underbrace{\mathrm{N}_{\lambda_{l},[\Pi_{l}]}^{c}\times\dots\times \mathrm{N}_{\lambda_{l},[\Pi_{l}]}^{c}}_{\text{$d_{l}$ times}}
\]
following from lemma \ref{lemma:abelian}. Thus, such vector space structure is equivalent to an abelian group structure, in such a way that the previous isomorphism is also an isomorphism of groups. 
It is straightforward to verify that the product $\mathrm{G}_{1}\cdot\dots\cdot \mathrm{G}_{s-1}$ is normal in $\Aut\left(E_{N}\right)$. Therefore, the factorization \eqref{eq:semidirect} of the group $\Aut\left(E_{N}\right)$ readily follows.

When $s = 2$ (for example, when $E_{N}$ has rank 2), 
for the only special permutation matrix $[\Pi_{0}]=[\Pi_{1}]$, we have that $\mathrm{N}_{\lambda_{1},[\Pi_{1}]}^{c} = \mathrm{N}_{N}$. Hence, $\mathrm{G}_{1}\cdot {G}_{0}$ is abelian, and the factorization  \eqref{eq:semidirect s=2} follows as a consequence of 
\eqref{eq:semidirect}.
\end{proof}

\begin{remark}\label{rem:N_0}
The action of $\Aut\left(E_{N}\right)$ on each $\cF\left(E_{i}\right)\cong \cF_{r}$ actually preserves certain unions of cells in the Bruhat stratification of $\cF_{r}$
\[
\cF_{r} = \bigsqcup_{\Pi\in \mathrm{W}(r)} \cF_{\Pi} 
\]
inducing subsequent stratification refinements of the Harder-Narasimhan stratification of $\cN$, which are determined by the Bruhat stratifications of the components of $\cF^{n}_{r}$ on every stratum $\cN_{N}$ for which the underlying splitting $E_{N}$ is not a twist of the trivial bundle (see \ref{subsec:Bruhat}).  This finer stratification contains an open and dense set $\cN_{0}\subset \cN$, consisting of evenly-split bundles (remark \ref{rem:evenly-split}) for which the component representatives $F_{i}$ of the stable parabolic structure $\left([\mathbf{F}],\cW\right)$ belong to the $\Aut\left(E_{N}\right)$-orbit of the largest Bruhat cell in $\cF_{r}$.
\end{remark}

\section{Actions of $\mathrm{Aut}\left(E_{N}\right)$}\label{sec:actions}

The next step in the study of the group $\Aut\left(E_{N}\right)$ is the construction of a space where its action is transitive and free up to the action of the residual Levi complement $\mathrm{D}_{N}$. Such space is then a torsor for the subgroup $\mathcal{N}$.

A motivating example, albeit discarded in the present discussion, is the case of the group $\GL(2,\CC)$ with its standard left action on the configuration space of triples of points $w_{i}\in\CC\PP^{1}$ by M\"obius transformations. It is standard that such action is transitive, and every triple is stabilized by $Z(\GL(2,\CC))$. Now, the group $\GL(2,\CC)$ can be thought of as the group of automorphisms of a trivial rank 2 vector bundle on $\CC\PP^{1}$, while each point $w_{i}$ can be thought of as a flag in $\CC^{2}$. Thus, if flags are considered on the fibers of an arbitrary triple of points $z_{1},z_{2},z_{3}$ in the base, the previous result may be interpreted as a statement on the nature of the action of $\Aut \left(E\right)$ on a space of pairwise-different triples of flags $(w_{1},w_{2},w_{3})$ on the fibers in $E$ over $z_{1},z_{2},z_{3}\in\CC\PP^{1}$.

\begin{lemma}\label{lemma:action-D}
For any $1 \leq l \leq s-1$, the group $\mathrm{D}_{N}\subset \Aut\left(E_{N}\right)$ acts on the stratum $\cF_{\lambda_{l},[\Pi_{l}]}\subset \cF_{\lambda_{l}}$ by means of its adjoint action in $\mathrm{N}_{\lambda_{l},\left[\Pi_{l}\right]}^{c}\subset \mathrm{N}_{N}$. The special flags $F_{\lambda_{l},\left[\Pi_{l}\right]} := \left[\Pi_{l} \right] \in \cF_{\lambda_{l},[\Pi_{l}]}$ (remark \ref{rem:flag-action}) are stabilized by $\mathrm{D}_{N}$.
\end{lemma}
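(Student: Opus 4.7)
The plan is to exploit the Bruhat-cell parametrization established in corollary \ref{cor:action}. Every point of $\cF_{\lambda_{l},[\Pi_{l}]}$ has a unique representative of the form $L\Pi_{l}\cdot F_{\lambda_{l}}$ with $L\in \mathrm{N}_{\lambda_{l},[\Pi_{l}]}^{c}$, so it suffices to compute how the left multiplication by an element $D\in \mathrm{D}_{N}$ transforms the Bruhat coordinate $L$. Since $\mathrm{D}_{N}\subset \Aut(E_{N})|_{z}$ for every $z\in\CC\PP^{1}$, this left action coincides with the restriction of the standard $\GL(r,\CC)$-action on $\cF_{\lambda_{l}}$.

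Writing
\[
D\cdot(L\Pi_{l}\cdot F_{\lambda_{l}}) = (DLD^{-1})\cdot (D\Pi_{l}\cdot F_{\lambda_{l}}),
\]
the lemma reduces to two claims. First, I would show that $D\Pi_{l}\cdot F_{\lambda_{l}} = \Pi_{l}\cdot F_{\lambda_{l}}$, equivalently that $\Pi_{l}^{-1}D\Pi_{l}\in \mathrm{P}_{\lambda_{l}}$. This is immediate from the intertwining property $\Ad(\Pi_{l})(\mathrm{D}_{\lambda_{l}})=\mathrm{D}_{\lambda_{N}} = \mathrm{D}_{N}$ used in the proof of lemma \ref{lemma:abelian}: inverting gives $\Pi_{l}^{-1}D\Pi_{l}\in \mathrm{D}_{\lambda_{l}}\subset \mathrm{P}_{\lambda_{l}}$. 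Second, one must verify that $DLD^{-1}$ still belongs to $\mathrm{N}_{\lambda_{l},[\Pi_{l}]}^{c}$. By lemma \ref{lemma:abelian}, this subgroup is characterized entirely by (i) identity diagonal blocks and (ii) the vanishing of all off-diagonal blocks outside a single rectangular block, both descriptions being adapted to the partition $\lambda=\lambda_{N}$. Since $D$ is block-diagonal with respect to exactly the same partition $\lambda_{N}$, conjugation by $D$ merely rescales each block of $L$ without altering the identity diagonal blocks or activating any vanishing block. Therefore the induced action in Bruhat coordinates is precisely $L\mapsto DLD^{-1}$, i.e., the adjoint action.

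The second half of the statement is then a direct corollary: the special flag $F_{\lambda_{l},[\Pi_{l}]}=[\Pi_{l}]$ corresponds to the Bruhat coordinate $L=\mathrm{Id}_{r\times r}$, which is a fixed point of the adjoint action of any subgroup of $\GL(r,\CC)$. The only subtle point worth monitoring is the distinction between the two partitions $\lambda_{N}$ and $\lambda_{l}$: the block-diagonal structure of $D$ is rigid with respect to $\lambda_{N}$, while $\mathrm{N}_{\lambda_{l},[\Pi_{l}]}^{c}$ is \emph{a priori} defined using $\lambda_{l}$, and the compatibility that makes the argument work is precisely the fact, recorded in lemma \ref{lemma:abelian}, that the single surviving block of $\mathrm{N}_{\lambda_{l},[\Pi_{l}]}^{c}$ is cut out by the partition $\lambda_{N}$ itself.
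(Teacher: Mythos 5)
Your proof is correct and follows essentially the same route as the paper's: parametrize the stratum by Bruhat coordinates $[L\Pi_{l}P]$, use the intertwining relation $\Ad(\Pi_{l})(\mathrm{D}_{\lambda_{l}})=\mathrm{D}_{\lambda}$ to absorb $\Pi_{l}^{-1}D\Pi_{l}$ into $\mathrm{P}_{\lambda_{l}}$, conclude that the induced action is $L\mapsto \Ad(D)(L)$, and observe that the special flag corresponds to $L=\mathrm{Id}$. The only difference is that you spell out, via the block description of lemma \ref{lemma:abelian}, the $\Ad(\mathrm{D}_{N})$-invariance of $\mathrm{N}_{\lambda_{l},[\Pi_{l}]}^{c}$, which the paper simply declares straightforward to verify.
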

\begin{proof}
It is straightforward to verify that the subgroup $\mathrm{N}_{\lambda_{l},\left[\Pi_{l}\right]}^{c}$ is invariant under the adjoint action of $\mathrm{D}_{N}$ in $\mathrm{N}_{N}$ (recall that  $\mathrm{N}_{\lambda_{l},\left[\Pi_{l}\right]}^{c}\subset \mathrm{N}_{N}$). In the homogeneous space model, a point in  $\cF_{\lambda_{l},[\Pi_{l}]}$ corresponds to an orbit  $[L \Pi_{l} P]$, for $L\in \mathrm{N}_{\lambda_{l},\left[\Pi_{l}\right]}^{c}$ fixed, and $P\in\mathrm{P}_{\lambda_{l}}$ arbitrary. Moreover, by definition, we have that 
\[
\Ad\left(\Pi_{l}\right)\left(\mathrm{D}_{\lambda_{l}}\right)=\mathrm{D}_{\lambda}. 
\]
Therefore, an element $D\in \mathrm{D}_{N} = \mathrm{D}_{\lambda}$ acts on such an orbit as $[L \Pi_{l}P] \mapsto [\Ad(D)(L)\Pi_{l}P]$. It readily follows that $F_{\lambda_{l},\left[\Pi_{l}\right]}$ is stabilized by $\mathrm{D}_{\lambda}$, since its Bruhat coordinates are $L = \mathrm{Id}$.
\end{proof}

\begin{definition}\label{def:flag space}
Given a Birkhoff-Grothedieck splitting $E_{N}\to \CC\PP^{1}$, together with $n_{s} - n_{1} + 1 = d_{1} + \dots + d_{s-1} + 1$ arbitrary points $z_{0}=0,\footnote{The normalization $z_{0}=0$ is not strictly necessary, but implementing it accounts for a simpler proof of theorem \ref{theo:normalization}.} z_{1},\dots, z_{d_{1} + \dots + d_{s-1}}$ in $\CC\PP^{1}$, let
\[
\cC_{N}= \cF_{\lambda_{0},[\Pi_{0}]}^{0}\times\left(\prod_{i=1}^{d_{1}}\cF_{\lambda_{1},[\Pi_{1}]}^{i}\right)\times\dots\times\left(\prod_{i=1}^{d_{s-1}}\cF_{\lambda_{s-1},[\Pi_{s-1}]}^{d_{1}+\dots+d_{s-2}+i}\right),
\]
where each $\cF^{i}_{\lambda_{l},[\Pi_{l}]}$ is the corresponding stratum of the flag manifold $\cF_{\lambda_{l}}^{i}$ associated to the fiber $E_{i}$ over the point $z_{i}$ in  $E_{N}$.
\end{definition}

\begin{remark}\label{rem:geom-action}
It is a crucial fact that each of the Bruhat strata that are present in the space $\cC_{N}$ have an intrinsic interpretation in a holomorphic bundle $E$ isomorphic to $E_{N}$, i.e., they are independent of an arbitrary choice of Birkhoff-Grothendieck splitting for $E$. Concretely, for every point $z\in\CC\PP^{1}$ the stratum $\cF^{z}_{\lambda_{l},\left[ \Pi_{l}\right]} \subset \cF^{z}_{\lambda_{l}}$ inside the manifold of generalized flags of type $\lambda_{l}$ in the fiber $E|_{z}$ is characterized by containing those generalized flags $F'_{z}$ whose $(i_{1} + \dots + i_{l})$-dimensional subspace $F'_{s - l + 1}$ is transversal to the $(i_{l + 1} + \dots + i_{s})$-dimensional space $F_{l + 1}$ of the canonically induced Harder-Narasimhan flag, that is,
\[
F'_{s - l + 1}\cap F_{l + 1} = \{0\}.
\]
Thus, the space $\cC_{N}$ is actually a holomorphic invariant of every holomorphic vector bundle $E \to \CC\PP^{1}$, depending only on the auxiliary and arbitrary choice of base points $z_{0},\dots, z_{d_{1} + \dots + d_{s-1}}$.
\end{remark}

By definition, and implementing the fundamental factorization \eqref{eq:semidirect} from theorem \ref{theo:factorization} for a special collection of localized automorphisms, we may obtain an induced natural left action of the group $\Aut\left(E_{N}\right)$ on $\cC_{N}$. From corollary \ref{cor:action}, we can conclude our main result regarding the structure of the group $\Aut(E_{N})$. 

\begin{theorem}\label{theo:normalization}
The induced left action of $ \left(\mathrm{G}_{1}\cdot\dots\cdot \mathrm{G}_{s-1}\right)\rtimes N_{N} \subset\Aut\left(E_{N}\right)$ on $\cC_{N}$ is free and transitive (in particular, the action is proper).
\end{theorem}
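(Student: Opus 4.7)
The plan is to prove freeness via an inductive normalization in the block index $l$, and then deduce transitivity and properness from a dimension count combined with the general properties of connected unipotent group actions on irreducible algebraic varieties. The key preparatory step is to fix the arbitrary decomposition $\Aut(E_{N})_{jk} = V^{0}_{jk}\oplus \bigoplus_{l:\,k\leq l < j} V^{l}_{jk}$ from the proof of Theorem \ref{theo:factorization} using Lagrange interpolation at the points $z_{0}=0, z_{1},\dots,z_{d_{1}+\dots+d_{s-1}}$.

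Concretely, for each pair $j>k$ and each $l$ with $k\leq l<j$, I would take $V^{l}_{jk}$ to be the $d_{l}$-dimensional space (per matrix entry) of polynomials of degree $\leq n_{j}-n_{k}$ vanishing at $z_{0}$ and on $S_{l'}$ for all $l' \in \{k,\dots,j-1\} \setminus \{l\}$; and $V^{0}_{jk}$ to be spanned (per entry) by the polynomial equal to $1$ at $z_{0}$ and vanishing on $\bigcup_{l' \in \{k,\dots,j-1\}}S_{l'}$. Three properties then follow by direct inspection: \emph{(i)} $\mathrm{G}_{l}(z_{0}) = \mathrm{Id}$ for every $l\geq 1$; \emph{(ii)} for $z_{i}\in S_{l}$, evaluation $\mathrm{G}_{l}\to \mathrm{N}_{\lambda_{l},[\Pi_{l}]}^{c}$ at $z_{i}$ realizes the projection to the $i$-th factor in $\mathrm{G}_{l}\cong (\mathrm{N}_{\lambda_{l},[\Pi_{l}]}^{c})^{d_{l}}$; \emph{(iii)} for $l'>l$ and $z_{i}\in S_{l}$, the block constraint $j > l' \geq k$ of $\mathrm{G}_{l'}$ combined with non-vanishing at $z_{i}$ (which requires $l\notin\{k,\dots,j-1\}$) forces $k>l$, so that $\mathrm{G}_{l'}(z_{i})$ is confined to the lower-right $(i_{l+1}+\dots+i_{s})\times(i_{l+1}+\dots+i_{s})$ block, and hence lies in the block-diagonal subgroup $\mathrm{N}_{\lambda_{l},[\Pi_{l}]}\cong\mathrm{N}(i_{1}+\dots+i_{l})\times\mathrm{N}(i_{l+1}+\dots+i_{s})$ of $\mathrm{N}(r)$ (identification computed directly from Lemma \ref{lemma:semidirect}).

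For freeness, suppose $g = g_{s-1}\cdots g_{1}g_{0}$ stabilizes every reference flag $F_{\lambda_{l(i)},[\Pi_{l(i)}]} = [\Pi_{l(i)}]$, which is equivalent to $g(z_{i}) \in \mathrm{N}_{\lambda_{l(i)},[\Pi_{l(i)}]}$ for every $i$. At $z_{0}$, property (i) gives $g(z_{0}) = g_{0}\in\mathrm{N}_{N} = \mathrm{N}_{\lambda_{0},[\Pi_{0}]}^{c}$, and the trivial intersection in Lemma \ref{lemma:semidirect} forces $g_{0} = \mathrm{Id}$. Inductively, assuming $g_{0} = \dots = g_{l-1} = \mathrm{Id}$, for each $z_{i}\in S_{l}$ one writes $g(z_{i}) = (g_{s-1}(z_{i})\cdots g_{l+1}(z_{i}))\cdot g_{l}(z_{i}) = n_{0}\cdot g_{l}(z_{i})$ with $n_{0}\in \mathrm{N}_{\lambda_{l},[\Pi_{l}]}$ by (iii) and $g_{l}(z_{i})\in \mathrm{N}_{\lambda_{l},[\Pi_{l}]}^{c}$. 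The stabilization hypothesis then yields $g_{l}(z_{i}) = n_{0}^{-1}g(z_{i})\in \mathrm{N}_{\lambda_{l},[\Pi_{l}]}^{c}\cap\mathrm{N}_{\lambda_{l},[\Pi_{l}]} = \{\mathrm{Id}\}$; property (ii), applied over all $z_{i}\in S_{l}$, yields $g_{l} = \mathrm{Id}$, closing the induction.

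For transitivity and properness, the orbit map is a morphism of smooth irreducible algebraic varieties of equal dimension (the sum $\dim\mathrm{N}_{N}+\sum_{l=1}^{s-1}d_{l}\dim\mathrm{N}_{\lambda_{l},[\Pi_{l}]}^{c}$ matches $\dim\cC_{N}$ by direct computation), and the acting group is connected and unipotent. Its orbits in a smooth quasi-affine variety are closed, so freeness together with matching dimensions forces transitivity, and properness of the resulting free algebraic action is automatic. The main obstacle lies in verifying property (iii), whose combinatorial content — that the cross-evaluation $\mathrm{G}_{l'}(z_{i})$ lands in the ``passive'' factor $\mathrm{N}_{\lambda_{l},[\Pi_{l}]}$ of the Bruhat-type decomposition $\mathrm{N}(r) = \mathrm{N}_{\lambda_{l},[\Pi_{l}]}^{c}\cdot \mathrm{N}_{\lambda_{l},[\Pi_{l}]}$ at $z_{i}$ — is precisely what permits the clean decoupling of the successive normalizations.
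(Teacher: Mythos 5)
Your proposal is correct, and it reaches the theorem by a genuinely different route than the paper. The preparatory step is the same: like the paper, you rigidify the decomposition $V^{0}_{jk}\oplus\bigoplus_{l}V^{l}_{jk}$ by Lagrange interpolation at $z_{0},z_{1},\dots,z_{d_{1}+\dots+d_{s-1}}$ (the paper takes $V^{0}_{jk}$ to be constants rather than polynomials normalized at $z_{0}$; both satisfy the required group constraint), and your properties (i)--(ii) are exactly the localization facts the paper extracts from this choice. The divergence is in how transitivity is obtained. The paper proves a stronger pointwise statement --- that each $\mathrm{G}_{l}$, $l\geq 1$, acts freely and transitively on its own factor $\cC_{l}$ and \emph{trivially} on every other factor --- and then produces the transporting automorphism explicitly, normalizing first the $\cC_{0}$-component by $\mathrm{N}_{N}$ and then the rest by $\mathrm{G}_{1}\cdots\mathrm{G}_{s-1}$. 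You prove only that the stabilizer of the reference point is trivial, using your weaker property (iii) (cross-evaluations $\mathrm{G}_{l'}(z_{i})$, $z_{i}\in S_{l}$, $l'>l$, lie in the stabilizer $\mathrm{N}_{\lambda_{l},[\Pi_{l}]}$ of the \emph{reference} flag only), and then force transitivity non-constructively from the Kostant--Rosenlicht theorem (orbits of unipotent groups on affine varieties are closed; note $\cC_{N}$ is indeed affine, each cell being a torsor under, hence isomorphic to, a vector group) together with the dimension match; freeness at all points then follows by conjugacy of stabilizers. What each approach buys: the paper's argument is constructive, exhibiting the normalizing element that corollary \ref{cor:normalization} and the gauge constructions of section \ref{sec:gauge} actually use, whereas yours is softer; but yours is also more robust, because property (iii) is precisely what is true, while the paper's stronger trivial-cross-action claim is delicate. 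Its justification --- that $\mathrm{H}_{ll'}$ and $\mathrm{G}_{l'}$ commute ``as a consequence of \eqref{eq:commutator}'' --- sits uneasily with \eqref{eq:commutator} itself, which says the commutator of the two block subgroups equals their (generally nontrivial) intersection; e.g.\ for $r=3$, $\lambda=1+1+1$, the value $\mathrm{Id}+a\,E_{21}$ of $\mathrm{G}_{1}$ at a point of $S_{2}$ sends the flag with Bruhat coordinates $\mathrm{Id}+y\,E_{31}+z\,E_{32}$ to the one with coordinates $\mathrm{Id}+(y-az)E_{31}+z\,E_{32}$: it fixes the reference flag, as your (iii) predicts, but shears the rest of the cell. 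Since your induction and the orbit-closure argument never need more than the fixed-reference-flag statement, your proof is unaffected by this subtlety.
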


\begin{proof}
Let $d_{0}=0$. Recall from theorem \ref{theo:factorization} that the subgroups $\mathrm{G}_{0} \cong \mathrm{N}_{\lambda_{0},\left[\Pi_{0}\right]}^{c} = \mathrm{N}_{N}$ and $\mathrm{G}_{l}$, $l > 0$, are constructed after a choice of subspaces $V^{l}_{jk}$ is made, where $j>k$ by hypothesis (recall that the blocks with $j=k$ determine the subgroup $\mathrm{D}_{\lambda}$). Choose each $V^{0}_{jk}$ to consist of constant entries. In turn, for the remaining block components, let $V_{jk}^{k}\oplus\dots\oplus V_{jk}^{j-1}$ be spanned by the $jk$-th blocks determined by a collection of Lagrange polynomials of degree $d_{k} + \dots + d_{j-1}$ for the points $z_{d_{0} + \dots + d_{k-1}+1},\dots, z_{d_{0} + \dots + d_{j-1}}$, and with a simple zero at $z_{0}=0$,
and for every $k \leq l \leq j-1$, let $V^{l}_{jk}$ be the subspace spanned by the subcollection of Lagrange polynomials for the points $z_{d_{0} + \dots + d_{l-1}+1},\dots, z_{d_{0} + \dots + d_{l}}$ (assumed to vanish at $z_{0}=0$). If one of these points is $\infty$, the corresponding trivialization of $E_{N}$ must be considered. 
The constraint for $\mathrm{G}_{0}$ is obviously satisfied, and each $\mathrm{G}_{l}$, for $l=1,\dots,s-1$, acts freely and transitively on the factor 
\[
\cC_{l}=\prod_{j=1}^{d_{l}}\cF_{\lambda_{l},[\Pi_{l}]}^{d_{0} + \dots + d_{l-1} +j}.
\]
and similarly, the subgroup $\mathrm{G}_{0} = \mathrm{N}_{N}$ acts freely and transitively over the stratum $\cC_{0} = \cF_{\lambda_{0},\left[\Pi_{0}\right]}^{0}$.

In fact, for any $l >0$, the action of $\mathrm{G}_{l}$ on the remaining factors $\{\cC_{l'}\}_{l'\neq l}$, including the stratum $\cC_{0}$, is trivial. The latter case is easy to see, since by construction $\mathrm{G}_{l}|_{z_{0}} =  \{\mathrm{Id}_{r \times r}\}$. For the remaining cases, we first observe that for all $i = d_{0} + \dots + d_{l' - 1} +1,\dots, d_{0} + \dots + d_{l'}$, in a similar way as before, we have that
\[
\mathrm{G}_{l} \cap \mathrm{G}_{l'}|_{z_{i}} = \{\mathrm{Id}_{r \times r}\},
\]
and the claim is true for the subgroup  $\mathrm{G}_{l} \cap \mathrm{G}_{l'} \subset \mathrm{G}_{l}$. Now, due to its abelian nature, we can decompose the group $\mathrm{G}_{l}$ as a direct product 
\[
\mathrm{G}_{l} = \mathrm{G}_{l} \cap \mathrm{G}_{l'} \times \mathrm{H}_{l l'}, 
\]
where $\mathrm{H}_{l l'}$ has a nonzero $jk$-th block if $l < j \leq l'$ and $1 \leq k \leq l$ in the case $l < l'$, or if $ j > l$ and $ l' < k \leq l$ in the case $l > l'$.  In any case, the group 
\[
\Pi_{l'} \mathrm{H}_{l l'} \Pi_{l'}^{-1}
\]
is a subgroup of $\mathrm{N}_{\lambda_{l'}}\subset \mathrm{P}_{\lambda_{l'}}$, which can be readily seen from the fact that $\tau_{l'}$ is the $r$-cycle defined by shifting forward (modulo $r$) by $i_{1} + \dots + i_{l'}$. Since $\mathrm{H}_{l l'}$ and $\mathrm{G}_{l'}$ commute as a consequence of \eqref{eq:commutator}, it follows that the action of $\mathrm{H}_{l l'}$ on $\cC_{l'}$ is also trivial, since every component of an element in $\cC_{l'}$ is of the form $L \Pi_{l'} \cdot F_{\lambda_{l'}}$, with $L\in \mathrm{G}_{l'}$, and by definition, the special flag $F_{\lambda_{l'}}$ is stabilized by $\mathrm{P}_{\lambda_{l'}}$. Consequently, the subgroup $\mathrm{G}_{1}\cdot\dots\cdot \mathrm{G}_{s-1}$ acts freely and transitively on $\cC_{1}\times\dots\times \cC_{s-1}$.

Clearly, the stabilizer in $\left(\mathrm{G}_{1}\cdot\dots\cdot \mathrm{G}_{s-1}\right) \rtimes \mathrm{N}_{N}$ of any point in $\cC_{N}$ is the identity. Now, although the group $\mathrm{N}_{N} = \mathrm{N}_{\lambda_{0},\left[\Pi_{0}\right]}^{c}$ doesn't act trivially on any factor $\cC_{l}$, $l=0,\dots, s-1$, the theorem follows, since given any point in $\cC_{N}$, we can first map its component in $\cC_{0}$ to any given flag in the stratum over $z_{0}$. Then $\mathrm{G}_{1}\cdot\dots\cdot \mathrm{G}_{s-1}$ stabilizes such flag, and also acts freely and transitively in the complement $\cC_{N}\setminus \cC_{0}$. Consequently, the components in $\cC_{N}\setminus \cC_{0}$ can also be normalized in any desired way. Hence, any two points in $\cC_{N}$ can be connected by a unique element in $\left(\mathrm{G}_{1}\cdot\dots\cdot \mathrm{G}_{s-1}\right) \rtimes \mathrm{N}_{N}$. 
This completes the proof.
\end{proof}

Let us consider the special flag representatives $F_{\lambda_{l},\left[\Pi_{l}\right]} = [\Pi_{l}]$ in the Bruhat cells $\cF_{\lambda_{l},\left[\Pi_{l}\right]}$ (see remark \ref{rem:flag-action}). As a consequence of lemma \ref{lemma:action-D} and theorem \ref{theo:factorization}, we conclude the following corollary.

\begin{corollary}[Normalization of flags]\label{cor:normalization}
Under the action of $\Aut\left(E_{N}\right)$, every element in $\cC_{N}$ can be normalized to the special element 
\[
\left(F_{\lambda_{0},[\Pi_{0}]}, \underbrace{F_{\lambda_{1},[\Pi_{1}]},\dots,F_{\lambda_{1},[\Pi_{1}]}}_{\text{$d_{1}$ times}},\dots, \underbrace{F_{\lambda_{s-1},[\Pi_{s-1}]},\dots,F_{\lambda_{s-1},[\Pi_{s-1}]}}_{\text{$d_{s-1}$ times}}\right).
\]
which is stabilized by the subgroup $\mathrm{D}_{N}$.
\end{corollary}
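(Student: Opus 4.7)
The plan is to derive the corollary as an immediate consequence of Theorem \ref{theo:normalization}, Theorem \ref{theo:factorization}, and Lemma \ref{lemma:action-D}, since essentially all the heavy lifting has already been done. The only substantive content of the corollary is the packaging of these results: the normalization portion follows from transitivity, while the $\mathrm{D}_{N}$-stabilization portion follows from the componentwise computation of stabilizers in Lemma \ref{lemma:action-D}.

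Concretely, I would first invoke Theorem \ref{theo:normalization} to observe that the subgroup $\left(\mathrm{G}_{1}\cdot\dots\cdot \mathrm{G}_{s-1}\right)\rtimes \mathrm{N}_{N}\subset \Aut(E_{N})$ acts freely and transitively on $\cC_{N}$. Since the special element displayed in the statement lies in $\cC_{N}$ (each $F_{\lambda_{l},[\Pi_{l}]}=[\Pi_{l}]$ belongs to the relevant Bruhat cell $\cF_{\lambda_{l},[\Pi_{l}]}$ by remark \ref{rem:flag-action}), transitivity immediately produces, for any given point in $\cC_{N}$, a (unique) element of this subgroup mapping the point to the displayed special element. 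This establishes the normalization claim.

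For the stabilization claim, I would apply Lemma \ref{lemma:action-D} componentwise. That lemma asserts that each $F_{\lambda_{l},[\Pi_{l}]}$ is stabilized by $\mathrm{D}_{N}$ under the action on the corresponding Bruhat cell, because $\mathrm{D}_{N}=\mathrm{D}_{\lambda}=\Ad(\Pi_{l})(\mathrm{D}_{\lambda_{l}})$ preserves the distinguished flag whose Bruhat coordinates are the identity. Since $\Aut(E_{N})$ acts diagonally on the product $\cC_{N}$ (each $\mathrm{D}_{N}$-element is global, being contained in $\Aut(E_{N})|_{z}$ for every $z\in\CC\PP^{1}$), the product flag is fixed by $\mathrm{D}_{N}$ in $\cC_{N}$. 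Finally, combining with the semidirect product decomposition \eqref{eq:semidirect} from Theorem \ref{theo:factorization} and the freeness statement of Theorem \ref{theo:normalization}, one identifies the full stabilizer in $\Aut(E_{N})$ of the special element as precisely $\mathrm{D}_{N}$.

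There is no genuine obstacle here; the main thing to be mindful of is verifying that the action of $\mathrm{D}_{N}$ commutes compatibly with the chosen transitive action of $\left(\mathrm{G}_{1}\cdot\dots\cdot \mathrm{G}_{s-1}\right)\rtimes \mathrm{N}_{N}$, so that the stabilizer decomposition coming from the semidirect product \eqref{eq:semidirect} is reflected faithfully on $\cC_{N}$. This is a direct consequence of the adjoint action of $\mathrm{D}_{N}$ preserving each $\mathrm{N}_{\lambda_{l},[\Pi_{l}]}^{c}$ (already noted in the proof of Lemma \ref{lemma:action-D}), so no further argument is required.
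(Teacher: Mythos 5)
Your proposal is correct and follows essentially the same route as the paper, which states the corollary as a direct consequence of Lemma \ref{lemma:action-D}, Theorem \ref{theo:factorization}, and the transitivity/freeness established in Theorem \ref{theo:normalization}: transitivity of $\left(\mathrm{G}_{1}\cdot\dots\cdot\mathrm{G}_{s-1}\right)\rtimes\mathrm{N}_{N}$ gives the normalization, and the componentwise $\mathrm{D}_{N}$-invariance of the flags $F_{\lambda_{l},[\Pi_{l}]}$ gives the stabilization. Your additional observation that freeness plus the semidirect decomposition \eqref{eq:semidirect} pins down the stabilizer as \emph{exactly} $\mathrm{D}_{N}$ is a valid (and slightly stronger) refinement of the paper's claim, not a different method.
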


\begin{remark}\label{rem:evenly-split}
The following case is of special importance when holomorphic families of parabolic bundles on $\CC\PP^{1}$ are considered, due to its genericity. A vector bundle $E_{N}\to\CC\PP^{1}$ is said to be \emph{evenly-split} if its splitting coefficients satisfy $|m_{j}-m_{k}|\leq 1$ $\forall\, j,k$. If $E_{N}$ is not a twist of the trivial bundle, then $r=i_{1} + i_{2}$, and $n_{2}-n_{1} = d_{1} = 1$. The evenly-split condition is open on any algebraic family of vector bundles over $\CC\PP^{1}$ containing an evenly-split bundle, and in particular, for the families of underlying bundles on any moduli space of stable parabolic bundles $\cN$. It readily follows that
\[
\Aut\left(E_{N}\right) \cong \left(\mathrm{N}_{N}\times \mathrm{N}_{N}\right) \rtimes \mathrm{D}_{N}.
\]
Upon consideration of the generalized flag manifolds $\cF_{N_{0}}^{0}$ and $\cF_{N_{0}}^{\infty}$, whose points parametrize, respectively, generalized flags of type $r = i_{2} + i_{1}$ in the fibers over the points $z_{0}=0$ and $z_{1}=\infty$ in $\CC\PP^{1}$, our conclusion is that $\Aut\left(E_{N}\right)$ acts transitively on the product of the large Bruhat cells $\cF^{0}_{N_{0},[\Pi_{0}]}\times \cF^{\infty}_{N_{0},[\Pi_{0}]}$. Consequently, every element can be normalized to 
\[
\left(F_{N_{0},[\Pi_{0}]},F_{N_{0},[\Pi_{0}]}\right),
\]
which has the subgroup $\mathrm{D}_{N}$ as its stabilizer. 
\end{remark}

\section{Holomorphic gauge-fixing of logarithmic connections}\label{sec:gauge}

The contents of this section rely on the infinitesimal description of the moduli space $\cN$, for an implicit choice of generic parabolic weights $\cW$ for which $\cN\neq \emptyset$ and all flags are complete. For convenience, we will assume without loss of generality that $z_{n-1} = 0$ and $z_{n} = \infty$. 

It follows from the infinitesimal deformation theory and Serre duality that, given any isomorphism class of stable parabolic bundles $\{E_{*}\}\in \cN$, together with the isomorphism $\{E_{*}\} \cong \left(E_{N},[\mathbf{F}]\right)$, its holomorphic cotangent space can be described, in terms of any given representative $\mathbf{F}\in[\mathbf{F}]$, as the space
\[
T^{*}_{\left(E_{N},[\mathbf{F}]\right)}\cN\cong H^{0}\left(\CC\PP^{1},\Par\End\left(E_{N},\mathbf{F}\right)^{\vee}\otimes K_{\CC\PP^{1}}\right)
\]
where $\Par\End\left(E_{N},\mathbf{F}\right)$ is the holomorphic vector bundle over $\CC\PP^{1}$, of degree 
\[
\deg \left(\Par\End\left(E_{N},\mathbf{F}\right)\right) = -n\dim \cF_{r}, 
\]
associated to the locally-free sheaf of endomorphisms of $E_{N}$ that preserve the quasi parabolic structure determined by $\mathbf{F}$ \cite{MS80,BH95}. Therefore, the possible values that any local section can attain at the points $z_{1},\dots,z_{n}$ form a solvable Lie algebra isomorphic to $\mathfrak{b}(r)$. Moreover, there is a bundle map
\[
\Par\End\left(E_{N},\mathbf{F}\right)\to \End\left(E_{N}\right)
\]
which is an isomorphism away from the fibers over the points $z_{1},\dots,z_{n}$. It follows from the parabolic stability of the pair $\left(E_{N},\mathbf{F}\right)$ with respect to $\cW$ that 
\[
H^{0}\left(\CC\PP^{1},\Par\End\left(E_{N},\mathbf{F}\right)\right) \cong \CC,
\]
with global sections corresponding to the multiples of the identity in $\mathfrak{gl}(r,\CC)$. In particular, the Riemann-Roch theorem for vector bundles then implies that 
\[
\dim\cN = n\dim \cF_{r} -(r^2-1). 
\]
\begin{remark}\label{remark:explicit-bundle}
For the sake of clarity of the future constructions, we will provide a more detailed account of the bundle $\Par\End\left(E_{N},\mathbf{F}\right)$. Recall from definition \ref{def:Bruhat-coord}  that, for any $\Pi\in W(r)$ and its corresponding Bruhat cell $\cF_{r,\Pi} \subset \cF_{r}$, its Bruhat coordinates, corresponding to the nonzero off-diagonal entries of $L \in \mathrm{N}_{r,\Pi}^{c}$, are a consequence of the uniqueness of the standard Bruhat decomposition $g = L\Pi P$ of any $g\in\GL(r,\CC)$ (lemma \ref{lemma:factorization}), thought of as an ordered frame giving rise to a decreasing flag
\[
F = \{V_{1} = \CC^{r}\supset V_{2}\supset\dots,\supset V_{r}\supset \{0\}\},
\]  
and where $P\in B(r)$ represents the arbitrariness of the parametrization. It readily follows that the Lie algebra $\mathfrak{p}(F)$ of endomorphisms of $\CC^{r}$ preserving $F$ satisfies
\[
\mathfrak{p}(F) = \Ad\left(L\Pi\right)\left(\mathfrak{b}(r)\right).
\]
Now, for every $i = 1,\dots,n$, let $\left(\Pi_{i},L_{i}\right)$ be the Bruhat coordinates of the complete descending flag in the $i$th component of $\mathbf{F}$. 
The vector bundle of parabolic endomorphisms $\Par\End\left(E_{N},\mathbf{F}\right)$ can be explicitly constructed as a ``twist" of the endomorphism bundle $\End\left(E_{N}\right)$, if one refines the transition functions of the latter at a collection of non-intersecting punctured disks $\cU_{i}$ centered at each point $z_{1},\dots,z_{n}$. Letting $\cU_{0} = \CC\PP^{1}\setminus S$, and thanks to the algebra structure of $\mathfrak{gl}(r,\CC)$, the transition functions defining $\Par\End\left(E_{N},\mathbf{F}\right)$ can be represented in matrix form as $g_{0i}(z) = \Ad\left(L_{i}\Pi_{i}\right)\left(f_{0i}(z)\right)$, where for $i = 1, \dots, n-1$,
\[
\left(f_{0i}(z)\right)_{kl} = \left\{
\begin{array}{cl}
1 & \text{if $k \geq l$}\\\\ 
(z-z_{i})^{-1} & \text{if $k <  l$},
\end{array}
\right. 
\]
while
\[
\left(f_{0n}(z)\right)_{kl} = \left\{
\begin{array}{cl}
z^{m_{k} - m_{l}} & \text{if $k \geq l$}\\\\ 
z^{m_{k} - m_{l} - 1} & \text{if $k <  l$}.
\end{array}
\right. 
\]
Finally, if any other representative $\mathbf{F}'$ is chosen in the $\Aut\left(E_{N}\right)$-orbit $[\mathbf{F}]$ of $n$-tuples of flags, there is a an induced bundle isomorphism between $\Par\End\left(E_{N},\mathbf{F}\right)$ and the corresponding bundle of parabolic endomorphisms $\Par\End\left(E_{N},\mathbf{F}'\right)$, and similarly for its spaces $H^{k}\left(\CC\PP^{1},\Par\End\left(E_{N},\mathbf{F}'\right)\right)$, $k=0,1$, given explicitly in terms of the element $g\in\Aut\left(E_{N}\right)$ such that $\mathbf{F}' = g\cdot \mathbf{F}$. 
\end{remark}

\begin{lemma}\label{lemma:matrix-differentials}
The elements of $H^{0}\left(\CC\PP^{1},\Par\End\left(E_{N},\mathbf{F}\right)^{\vee}\otimes K_{\CC\PP^{1}}\right)$, i.e., the parabolic Higgs fields for the pair $\left(E_{N},\mathbf{F}\right)$, correspond to the $\mathfrak{gl}(r,\CC)$-valued meromorphic differentials $\Phi (z)$ on $\CC\PP^{1}$, that are holomorphic over $\CC\PP^{1}\setminus S$, and such that, moreover,
\begin{itemize}
\item[(i)]
 $\Phi(z)$ has simple poles over $z_{1},\dots, z_{n-1}\subset\CC$,
 \item[(ii)] $z^{-N}\Phi(z)z^{N}$ has a simple pole at $z_{n} = \infty$,
 \item[(iii)] the residues of $\Phi$ at $z_{1},\dots, z_{n - 1}$ (and of $z^{-N}\Phi z^{N}$ at $z_{n} = \infty$) belong to the nilpotent radical Lie algebras $\mathfrak{n}(F_{i})$, $i = 1,\dots,n$.  
\end{itemize}
\end{lemma}
\begin{proof}
The claims (i)--(iii) readily follow from the construction of the bundle $\Par\End\left(E_{N},\mathbf{F}\right)$ and its dual, described in terms of affine trivializations and transition functions. If $g_{\alpha\beta}$ is a set of transition functions for a bundle $E$, then its dual $E^{\vee}$ is determined by the transition functions $g^{\vee}_{\alpha\beta} = {}^{t}g_{\alpha\beta}^{-1}$. The additional twists over each of the points $z_{1},\dots,z_{n}$ determine the polar behavior of the local sections of $\Par\End\left(E_{N},\mathbf{F}\right)^{\vee}\otimes K_{\CC\PP^{1}}$ over $S$, and its relation to the quasi parabolic structure $\mathbf{F}$.  It is readily seen that the residue at each $z_{i}$ is restricted to lie in the Lie algebra $\mathfrak{n}(F_{i})$. 
\end{proof}

Given a parabolic bundle pair $(E_{N},\mathbf{F})$, let $\Res\left(\mathbf{F}\right)$ be the set of parabolic Higgs fields admitting the local form
\[
\left(\sum_{i = 1}^{n - 1}\frac{c_{i}}{z - z_{i}}\right)\mathrm{d}z
\]
 on the affine trivialization over $\CC_{0}$, where $c_{i}\in \mathfrak{n}(F_{i})$. It readily follows that $\Res\left(\mathbf{F}\right)$ is a subspace of the vector space of parabolic Higgs fields for the pair $(E_{N},\mathbf{F})$, and its elements are completely characterized by the residue values on $z_{1},\dots,z_{n}$.
As a corollary of lemma \ref{lemma:matrix-differentials}, we conclude that every parabolic Higgs field $\Phi$ on the parabolic bundle $(E,\mathbf{F})$ can be split into two pieces of geometric data, 
\[
\Phi = \Res(\Phi) + F
\]
where $\Res(\Phi)$ is the projection of $\Phi$ into $\Res(\mathbf{F})$, while the complementary data $F$ corresponds to an $\End\left(E_{N}\right)$-valued holomorphic differential $F$ on $\CC\PP^{1}$. The induced action of $\Aut \left(E_{N}\right)$ in both subspaces is of adjoint type (lemma \ref{lemma:restriction}). Its orbits are the relevant geometric objects in the moduli theory.

\begin{corollary}\label{corollary:parabolic Higgs field}
For every parabolic bundle pair $\left(E_{N},\mathbf{F}\right)$, its space of parabolic Higgs fields 
\[
H^{0}\left(\CC\PP^{1},\Par\End\left(E_{N},\mathbf{F}\right)^{\vee}\otimes K_{\CC\PP^{1}}\right)
\]
splits as an $\Aut\left(E_{N}\right)$-invariant  direct sum 
\[
H^{0}\left(\CC\PP^{1},\End\left(E_{N}\right)\otimes K_{\CC\PP^{1}}\right)\oplus \textrm{\emph{Res}}(\mathbf{F}).
\]
\end{corollary}

\begin{definition}
A \emph{logarithmic connection adapted to a parabolic structure} $\left(\mathbf{F},\cW\right)$ in the bundle splitting $E_{N}$ (cf. \cite{Simp90,LS13}) is a singular connection, holomorphic over $E_{N}|_{\CC\PP^{1}\setminus S}$, and prescribed in the affine trivialization over $\CC\subset\CC\PP^{1}$ by a meromorphic $\mathfrak{gl}(r,\CC)$-valued 1-form
\begin{equation}\label{eq:germ-0}	 
A(z) = \left(\sum_{i=1}^{n-1}\frac{A_{i}}{z-z_{i}} + f_{0}(z)\right) \mathrm{d}z
\end{equation} 
with $f_{0}(z)$ holomorphic, and such that, in the local coordinate at $\infty$,
\begin{equation}\label{eq:germ-infty}
z^{-N}A(z)z^{N} + N \mathrm{d}z/z
\end{equation}
has a simple pole, whose residue we will denote by $A_{n}$, and for each $i=1,\dots, n$, $A_{i}$ is semisimple with eigenvalues $\{\alpha_{ij}\}_{j=1}^{r}$, and with corresponding ordered eigenlines spanning the $n$-tuple of complete descending flags $\mathbf{F} = \left(F_{1},\dots, F_{n}\right)$.
\end{definition}
In other words, there exists a factorization 
\[
A_{i} = B_{i}W_{i}B_{i}^{-1}
\]
in such a way that, for each $i=1, \dots, n$, $B_{i}$ determines a projective ordered frame giving rise to $F_{i}$. It is not immediate that any given triple $\left(E_{N},\mathbf{F},\cW\right)$ admits addapted logarithmic connections. However, in the special case when such triple determines a stable parabolic bundle, the existence of  a logarithmic connection with irreducible unitary monodromy is guaranteed by the Mehta-Seshadri theorem \cite{MS80}.

\begin{corollary}\label{cor:affine}
For any triple $\left(E_{N},\mathbf{F},\cW\right)$, if nonempty, its space $\cA_{\mathbf{F}}$ of adapted logarithmic connections is an affine space for 
\[
H^{0}\left(\CC\PP^{1},\Par\End\left(E_{N},\mathbf{F}\right)^{\vee}\otimes K_{\CC\PP^{1}}\right).
\]
\end{corollary}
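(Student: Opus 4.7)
The plan is to deduce the statement directly from Lemma \ref{lemma:matrix-differentials}, by verifying two complementary facts: (a) the difference of any two elements of $\cA_{\mathbf{F}}$ satisfies conditions (i)--(iii) of that lemma, and (b) adding to an element of $\cA_{\mathbf{F}}$ any element of $H^{0}(\CC\PP^{1},\Par\ad(E_{N},\mathbf{F})^{\vee}\otimes K_{\CC\PP^{1}})$ produces another adapted logarithmic connection.

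For (a), the pole structure is immediate from the definition: given $A,A'\in\cA_{\mathbf{F}}$, the holomorphic terms $f_{0}(z)$ from \eqref{eq:germ-0} cancel in $A-A'$, which therefore has at worst simple poles at $z_{1},\dots,z_{n-1}$ and is holomorphic elsewhere on $\CC$ (condition (i)); and the $Ndz/z$ correction from \eqref{eq:germ-infty} cancels as well, so $z^{-N}(A-A')z^{N}$ inherits a simple pole at $z_{n}=\infty$ directly (condition (ii)). Condition (iii) is the substantive point. Using the diagonalizations $A_{i}=B_{i}W_{i}B_{i}^{-1}$ and $A'_{i}=B'_{i}W_{i}(B'_{i})^{-1}$, both ordered frames $B_{i},B'_{i}$ give rise to the \emph{same} complete descending flag $F_{i}$, so they differ by right multiplication by an element of the flag stabilizer, $B'_{i}=B_{i}P_{i}$ with $P_{i}\in\mathrm{B}(r)$. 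Then
\[
A_{i}-A'_{i}=B_{i}\bigl(W_{i}-P_{i}W_{i}P_{i}^{-1}\bigr)B_{i}^{-1},
\]
and since conjugation by the lower triangular $P_{i}$ preserves the diagonal of $W_{i}$, the inner factor lies in $\mathfrak{n}(r)$. Writing $B_{i}=L_{i}\Pi_{i}P$ via the Bruhat decomposition and using that $\Ad(\mathrm{B}(r))$ stabilizes $\mathfrak{n}(r)$ yields $A_{i}-A'_{i}=\Ad(L_{i}\Pi_{i})(c_{i})$ with $c_{i}\in\mathfrak{n}(r)$, exactly the form required.

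For (b), given $\Phi$ satisfying the three conditions and $A\in\cA_{\mathbf{F}}$, the sum $A':=A+\Phi$ inherits the required pole structure by inspection. To analyze the residues, write $A_{i}=\Ad(L_{i}\Pi_{i})(PW_{i}P^{-1})=\Ad(L_{i}\Pi_{i})(W_{i}+n_{i})$ in Bruhat coordinates, where $n_{i}\in\mathfrak{n}(r)$; the new residue becomes $\Res_{z_{i}}A'=\Ad(L_{i}\Pi_{i})(W_{i}+n_{i}+c_{i})$. Since $W_{i}+n_{i}+c_{i}$ is lower triangular with the distinct diagonal entries $\alpha_{i1},\dots,\alpha_{ir}$ (per the standing assumption of multiplicity-free weights), it is conjugate to $W_{i}$ via a unique $M_{i}\in\mathrm{N}(r)$, and $L_{i}\Pi_{i}M_{i}$ is again an ordered frame giving rise to $F_{i}$, since unipotent lower triangular matrices preserve the standard complete descending flag. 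Hence $A'\in\cA_{\mathbf{F}}$. The analysis at $z_{n}=\infty$ proceeds in parallel after applying the trivialization by $z^{N}$.

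The main obstacle is the diagonalizability step in (b): verifying that the modified residue remains semisimple with eigenlines correctly positioned relative to $F_{i}$. This relies essentially on the distinctness of the parabolic weights at each marked point, and on the fact that lower triangular matrices with distinct diagonal are conjugate to their diagonal by a unique element of $\mathrm{N}(r)$ that preserves the complete descending flag. Under the paper's hypothesis of complete, multiplicity-free weights this holds on the nose; in a non-generic setting one would have to substitute compatible filtrations for eigenlines, but that lies outside the present scope.
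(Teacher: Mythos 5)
Your proof is correct and follows essentially the same route as the paper: both reduce the statement to Lemma \ref{lemma:matrix-differentials} and parametrize the adapted residues via the Bruhat decomposition $B_{i}=L_{i}\Pi_{i}P_{i}$ together with the correspondence $P_{i}W_{i}P_{i}^{-1}=W_{i}+c_{i}$, $c_{i}\in\mathfrak{n}(r)$, which hinges on the distinctness of the weights exactly as you note. The paper merely compresses your steps (a) and (b) into the single assertion that this correspondence is bijective, so the content is identical.
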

\begin{proof}
This is so since there exist a unique factorization $A_{i} = B_{i}W_{i}B_{i}^{-1}$ for which the Bruhat decomposition $B_{i} = L_{i}\Pi_{i} P_{i}$ is such that $P_{i}\in N(r)$. There is a bijective correspondence between such $P_{i}$ and $c_{i}\in \mathfrak{n}(r)$ such that the following relation holds
\[
P_{i}W_{i}P_{i}^{-1} = W_{i} + c_{i}.
\]
It is then a straightforward consequence of lemma \ref{lemma:matrix-differentials} that the difference of any two logarithmic connections adapted to the same parabolic structure is precisely an element in $H^{0}\left(\CC\PP^{1},\Par\End\left(E_{N},\mathbf{F}\right)^{\vee}\otimes K_{\CC\PP^{1}}\right)$. 
 \end{proof}

In particular, it follows that for every $\left(E_{N},[\mathbf{F}]\right)\in \cN$, there is a bundle $\cA_{[\mathbf{F}]}$ of affine spaces over the $\Aut\left(E_{N}\right)$-orbit $[\mathbf{F}]$, whose fiber over an orbit representative $\mathbf{F}\in[\mathbf{F}]$, corresponds to the space of logarithmic connections $\cA_{\mathbf{F}}$, and which is an affine bundle for the vector bundle over $[\mathbf{F}]$ with fibers $H^{0}\left(\CC\PP^{1},\Par\End\left(E_{N},\mathbf{F}\right)^{\vee}\otimes K_{\CC\PP^{1}}\right)$.  The key point of our construction is the existence of an equivariant action of the group $\Aut\left(E_{N}\right)$ on the bundle $\cA_{[\mathbf{F}]} \to [\mathbf{F}]$, which is given for every  $g\in\Aut\left(E_{N}\right)$ as 
\[
A \mapsto g\cdot A = g A g^{-1} -\mathrm{d}g g^{-1}\in\cA_{\mathbf{F}'},\qquad  A\in\cA_{\mathbf{F}},  \quad \mathbf{F'} = g\cdot \mathbf{F}. 
\]
In a similar way, there is an equivariant action in the vector bundle over $[\mathbf{F}]$, mapping any parabolic Higgs field $\Phi \in H^{0}\left(\CC\PP^{1},\Par\End\left(E_{N},\mathbf{F}\right)^{\vee}\otimes K_{\CC\PP^{1}}\right)$ to 
\[
g\cdot \Phi = g\Phi g^{-1} \in H^{0}\left(\CC\PP^{1},\Par\End\left(E_{N},\mathbf{F'}\right)^{\vee}\otimes K_{\CC\PP^{1}}\right)
\]
inducing a vector space structure in its space of orbits, which we will denote by $\left[H^{0}\left(\CC\PP^{1},\Par\End\left(E_{N},\mathbf{F}\right)^{\vee}\otimes K_{\CC\PP^{1}}\right)\right]$. The latter serves as a model for the holomorphic cotangent space $T^{*}_{\left(E_{N},[\mathbf{F}]\right)}\cN$. 

Consequently, we can speak of an affine space $\left[\cA_{\mathbf{F}}\right]$, whose elements are $\Aut\left(E_{N}\right)$-orbits in $\cA_{[\mathbf{F}]}$, and whose underlying vector space is the orbit space $\left[H^{0}\left(\CC\PP^{1},\Par\End\left(E_{N},\mathbf{F}\right)^{\vee}\otimes K_{\CC\PP^{1}}\right)\right]$. Therefore, there is also an induced affine bundle on $\cN$ for the holomorphic cotangent bundle $T^{*}\cN$, which we will denote simply by $\cA$.


In conclusion, the presence of a Lie group of automorphisms in every bundle splitting $E_{N}$ implies that, for every point $\left(E_{N},[\mathbf{F}]\right)\in\cN$, the logarithmic connections adapted to a pair $\left([\mathbf{F}],\cW\right)$ make sense as an orbit under an action of the group $\Aut\left(E_{N}\right)$. In practice, it would be important to single out orbit representatives whose features are ``as best as possible" in a suitable sense. With such motivation, we formulate the following definition.

\begin{definition}\label{def:gauge}
Let $\left[A_{\mathbf{F}}\right] \in \left[\cA_{\mathbf{F}}\right]$ be an $\Aut\left(E_{N}\right)$-orbit of logarithmic connections on $E_{N}$ adapted to a pair $\left([\mathbf{F}],\cW\right)$. A \emph{holomorphic gauge} for $\left[A_{\mathbf{F}}\right]$  is a choice of representatives
\[
\mathbf{F}\in[\mathbf{F}], \qquad A_{\mathbf{F}}\in\cA_{\mathbf{F}}.
\]
\end{definition}

\begin{remark}\label{rem:centralizer}
The objective at present is to provide a canonical normalization of a logarithmic connection adapted to a parabolic structure on $E_{N}$, in terms of the normalization of the given parabolic structure, which follows from corollary \ref{cor:normalization}. We emphasize that the determination of all the possible splitting types supporting stable parabolic structures is a delicate question at the heart of the general scheme that we propose to construct the moduli spaces $\cN$. Therefore, we make the \emph{a priori} assumption that the pair $\left(E_{N},\mathbf{F}\right)$, not necessarily corresponding to a stable parabolic bundle, is such that 
\[
d_{1} + \dots + d_{s-1} + 1 \leq n
\]
In such case, it readily follows that $\dim \left(\mathrm{G}_{1}\cdot\dots\cdot \mathrm{G}_{s-1}\right)\rtimes \mathrm{N}_{N} \leq n\dim \cF_{r}$. 
The action of $\Aut\left(E_{N}\right)$ on any affine bundle $\cA_{[\mathbf{F}]}$ is never effective; any logarithmic connection adapted to an arbitrary pair $\left(E_{N},\mathbf{F}\right)$ is stabilized by $Z\left(\Aut\left(E_{N}\right)\right)$, which is isomorphic to $\CC^{*}$ since $\mathrm{B}(r)\hookrightarrow \Aut\left(E_{N}\right)$ always, and corresponds to nonzero multiples of the identity.
For every $n$-tuple of flags $\mathbf{F}$, there is an associated subgroup $\mathrm{Stab}(\mathbf{F})$, and moreover, if a logarithmic connection $A_{\mathbf{F}}$ adapted to $\left(\mathbf{F},\cW\right)$ is chosen in $\cA_{\mathbf{F}}$, there is also an induced subgroup $\mathrm{Stab}\left(A_{\mathbf{F}}\right))$, in such a way that
\[
Z\left(\Aut\left(E_{N}\right)\right) \subset \mathrm{Stab}\left(A_{\mathbf{F}}\right)\subset \mathrm{Stab}(\mathbf{F}).
\]
Several important possibilities may occur. 
The most relevant case corresponds to the equality $ \mathrm{Stab}(\mathbf{F}) = \mathrm{Stab}\left(A_{\mathbf{F}}\right) = Z\left(\Aut\left(E_{N}\right)\right)$. In such case, we conclude the following key observation: \emph{given any $\Aut\left(E_{N}\right)$-orbit $\left[A_{\mathbf{F}}\right]\in \left[\cA_{\mathbf{F}}\right]$, there is a unique representative $A_{\mathbf{F}}\in\cA_{\mathbf{F}}$, once an orbit representative $\mathbf{F}\in[\mathbf{F}]$ is chosen.} 
\end{remark}
 
The next and final goal of this work is to study the existence of different natural holomorphic gauges of logarithmic connections for the affine spaces of orbits $\left[\cA_{\mathbf{F}}\right]$.  

\subsection{The Bruhat gauge} \label{subsec:Bruhat}
Let us now consider the  fibration 
\[
\mathrm{pr}_{N} : \cF_{r} \to \cF_{N}.
\]
It follows from lemma \ref{lemma:factorization} and corollary \ref{cor:action} that $\mathrm{pr}_{N}$ is stratification-preserving, i.e.,
the image of every Bruhat cell  $\cF_{r,\Pi}\subset \cF_{r}$ lies inside the Bruhat cell $\cF_{N,[\Pi]}\subset\cF_{N}$, where $[\Pi]\in W(r)/ W_{N}$, and, in fact, $\mathrm{pr}^{-1}_{N}\left(\cF_{N,[\Pi]}\right)$ equals the union of all cells in $\cF_{r}$ corresponding to elements in the class $[\Pi]$,
\[
\mathrm{pr}^{-1}_{N}\left(\cF_{N,[\Pi]}\right) = \bigsqcup_{\Pi\in[\Pi]} \cF_{r,\Pi}
\]
The action of $\Aut\left(E_{N}\right)$ on the different Bruhat strata of $\cF^{1}_{r}\times\dots\times\cF^{n}_{r}$ can be described in terms of $\mathrm{pr}_{N}$. 
Since for every $z_{i}$, $\Aut\left(E_{N}\right)|_{z_{i}}\cap \mathrm{W}(r) = \mathrm{W}_{N}$, the action of $\Aut\left(E_{N}\right)$ preserves any stratum in $\cF^{i}_{N}$, and moreover, even though its action on $\cF^{i}_{r}$ does not preserve individual strata in general, it does preserves the collections of strata $\mathrm{pr}^{-1}_{N}\left(\cF^{i}_{N,[\Pi]}\right)$.

\begin{definition}
Given any pair $\left([\mathbf{F}],\cW\right)$, assume there exist $d_{1} + \dots + d_{s-1} + 1$ points $z_{i_{0}}, z_{i_{1}},\dots,z_{i_{d_{1} + \dots + d_{s-1}}}$ in $S$ whose corresponding flag components $F_{i_{0}}, F_{i_{1}},\dots, F_{i_{d_{1} + \dots + d_{s-1}}}$ for any representative $\mathbf{F}\in [\mathbf{F}]$ satisfy 
\[
\textrm{pr}_{N}\left(F_{i_{0}}\right) \in \cF^{i_{0}}_{\lambda_{0},[\Pi_{0}]},
\]
and
\[
\textrm{pr}_{N}\left(F_{d_{0} + \dots + d_{l-1} +j}\right) \in \cF^{d_{0} + \dots + d_{l-1} +j}_{\lambda_{l},\left[\Pi_{l}\right]},\qquad 1\leq l \leq s - 1,\quad 1\leq j \leq d_{l}
\]
\emph{a Bruhat gauge} for an orbit $\left[A_{\mathbf{F}}\right] \in \left[\cA_{\mathbf{F}}\right]$ is a choice of representative in $\cA_{\mathbf{F}}$, corresponding to any orbit representative $\mathbf{F}$ such that 
\begin{itemize}
\item[(i)] $\textrm{pr}_{N}\left(F_{i_{0}}\right) = F_{\lambda_{0},[\Pi_{0}]}$,
\item[(ii)] $\textrm{pr}_{N}\left(F_{d_{0} + \dots + d_{l-1} +j}\right) = F_{\lambda_{l},[\Pi_{l}]},\qquad \qquad 1\leq l \leq s - 1$.
\end{itemize}
a Bruhat gauge is called \emph{strong} if the action of the Levi complement quotient $D_{N}/Z\left(\Aut\left(E_{N}\right)\right)$ on any representative $\mathbf{F}$ and its corresponding logarithmic connection $A_{\mathbf{F}}$ as above is effective (lemma \ref{lemma:action-D}).
\end{definition}

\begin{remark}
For an admissible degree a rank, consider the associated evenly-split bundle, i.e. a splitting of the form $E_{N} = \mathcal{O}(m)^{r-p}\oplus  \mathcal{O}(m + 1)^{p}$, where $\deg\left(E_{N}\right) = mr +p$, $0 \leq p <r$. For any given moduli space $\cN$, let us recall the Zariski open set $\cN_{0}$ considered in remark \ref{rem:N_0}. If $\cP^{s}$ denotes the Zariski open set in
\[
\Aut\left(E_{N}\right)\cdot \left(\cF^{1}_{r,\Pi_{0}}\times\dots\times \cF^{n}_{r,\Pi_{0}}\right)\subset \cF^{1}_{r}\times\dots\times \cF^{n}_{r}
\]
whose points define stable parabolic structures with respect to the evenly-split bundle splitting $E_{N}$, then it follows that, by definition,
\[
\cN_{0} \cong P\left(\Aut\left(E_{N}\right)\right)\setminus \cP^{s} 
\]
Recall the automorphism group isomorphism from remark \ref{rem:evenly-split}, and consider the subgroup $\mathrm{N}_{N}\times\mathrm{N}_{N}\subset \Aut\left(E_{N}\right)$. It readily follows that its action on $\cP^{s}$ has trivial stabilizer. The Bruhat coordinates $L_{i} \in \mathrm{N}_{N}$ of the projections of the flag components $\mathrm{pr}_{N}(F_{i})$, $i = 1,\dots, n$, have the explicit form
\[
L_{i} =  \begin{pmatrix}
\mathrm{Id}_{(r-p) \times (r-p)} & 0\\
M_{i} & \mathrm{Id}_{p \times p}
\end{pmatrix}
\]
Hence, it is possible to normalize uniquely a point $\mathbf{F} \in \cP^{s}$, under the action of $\mathcal{N}\cong \mathrm{N}_{N}\times\mathrm{N}_{N}$, in such a way that the Bruhat coordinates of the projections of the flag components $\mathrm{pr}_{N}(F_{n-1})$ and $\mathrm{pr}_{N}(F_{n})$,  over $z_{n-1} =0, z_{n} = \infty$ take the form 
\[
L_{n-1} = L_{n} = \mathrm{Id}_{r \times r}
\]
leaving only the residual action of the subgroup $\mathrm{D}_{N}\subset\Aut\left(E_{N}\right)$. 
An element in $\mathrm{D}_{N}$ is given by a block-diagonal matrix 
\[
D =\begin{pmatrix}
D_{1} & 0\\
0 & D_{2}
\end{pmatrix} 
\]
and its action on $\mathrm{pr}_{N}\left(F_{i}\right)$ is determined as
\[
\Ad(D) \left(L_{i}\right) = \begin{pmatrix} 
\mathrm{Id} & 0\\
D_{2}M_{i}D_{1}^{-1} & \mathrm{Id}
\end{pmatrix}.
\]
Let us now consider the remaining Bruhat coordinates $L_{1},\dots, L_{n-2}$.  Since the stability of the corresponding parabolic structure implies that 
\[
 \mathrm{Stab}(\mathbf{F}) = Z\left(\Aut\left(E_{N}\right)\right),
\]
it follows that it is possible to find convenient normalizations of $L_{1},\dots, L_{n-2}$ under $P\left(\mathrm{D}_{N}\right)$ that could be performed over a given Zariski open set $\cN'_{0}$ in $\cN_{0}$. Such normalizations would not only provide strong Bruhat gauges of logarithmic connections on each $\cN'_{0}$, but also would provide a set of affine charts for the stratum $\cN_{0}$, which are particularly relevant in the case when $\cN = \cN_{0}$. For any $i = 1,\dots, n - 2$, let $\cN^{i}_{0}\subset\cN_{0}$ be the Zariski open subset of orbits for which  the matrices $M_{i}$ normalized under $\mathcal{N}$ have maximal rank. If $M_{i}$ is then normalized to a partial canonical frame under the action of the block $D_{i}$ of maximal rank, we are left with a residual adjoint action of the complementary diagonal block. When $r = 2$ or 3, the latter is always trivial, providing an identification of the normalized loci with each $\cN^{i}_{0}$. In general, further normalization conditions on the remaining blocks with respect to the residual adjoint action are required to be imposed. Hence, we have concluded the following result.



\end{remark}

\begin{corollary}\label{cor:Bruhat}
Every logarithmic connection on a stable parabolic bundle $\left(E_{N}, [\mathbf{F}]\right)$ in the Zariski open set $\cN_{0}\subset \cN$ (remark \ref{rem:N_0}) 
admits a Bruhat gauge. If $r = 2,3$, strong Bruhat gauges exist in the Zariski open sets $\cN^{i}_{0}\subset \cN$ for which $M_{i}\neq 0$, $i = 1,\dots, n - 2$, and are equivalent to a collection of affine charts for $\cN_{0}$. In general, strong Bruhat gauges can be realized over a Zariski open subset of $\cN_{0}$.
\end{corollary}

\subsection{The Riemann-Hilbert gauge}

For the sake of completeness of the exposition, we also discuss a reinterpretation of a classical result  of Plemelj (cf. \cite{GS99}), regarding the solvability of the Riemann-Hilbert problem (also known as Hilbert's 21st problem) in the context of holomorphic gauge-fixing of logarithmic connections, for the special case of semisimple residues adapted to a parabolic structure. No claim of originality is made here.  

Plemelj's original attempt at solving the Riemann-Hilbert problem (which appeared published in 1908, and is summarized in the book \cite{Ple64}), i.e. the construction of a Fuchsian system over the Riemann sphere with prescribed monodromy representation, relied on the implicit assumption that at least one of the residue matrices was semisimple, and consequently, was unsuited for dealing with the general case, including Bolibrukh's counterexamples \cite{Bo90}. This is more easily seen by adopting Rohl's modern approach \cite{Rohrl57}, where the Riemann-Hilbert problem is solved by constructing a suitable holomorphic vector bundle  equipped with a logarithmic connection, and then showing a bundle map to the trivial bundle of the same rank sending the connection into a Fuchsian system with the same monodromy. 

Since all logarithmic connections adapted to a given parabolic structure on a bundle $E_{N}$ have semisimple residues and fixed eigenvalues, it follows that the Riemann-Hilbert problem is always solvable in such case. We claim that such solution admits the interpretation of a choice of holomorphic gauge for the logarithmic connection.

Let us consider a local expression \eqref{eq:germ-0} over $\CC$ for a given logarithmic connection $A$ adapted to $\left(\mathbf{F},\cW\right)$. For a sufficiently large $0<R <|z|$, $A(z)$ can be expressed in the form 
\begin{equation}\label{eq:expansion-infty}
\zeta^{-N}\left(\frac{A_{n} + N}{\zeta} + f_{1}(\zeta) \right)\zeta^{N} \mathrm{d}\zeta
\end{equation}
where $\zeta = 1/z$ and $f_{1}(\zeta)$ is holomorphic.

\begin{lemma}\label{lemma:normalization-infty}
There exists an element $g\in \Aut\left(E_{N}\right)$ such that $g\cdot A$ expands near $z_{n} = \infty$ as 
\[
\left(\frac{W_{n} + N'}{\zeta} + f'_{1}(\zeta)\right) \mathrm{d}\zeta,
\]
where $N' = \Ad\left(\Pi^{-1}_{n}\right)(N)$. Moreover, such normalization for $A$ is essentially unique.
\end{lemma}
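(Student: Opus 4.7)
The plan is to parametrize elements of $\Aut(E_N)$ near infinity through the Taylor expansion of $\tilde g(\zeta) = z^{-N} g(z) z^N$ at $\zeta = 0$, which is holomorphic with $\tilde g(0)$ ranging freely over $\mathrm{P}_N$ and higher-order coefficients $\tilde g_k$ constrained to the specific block positions inherited from the polynomial structure of $g(z) = \sum_l P_l z^l$. A direct substitution into $g\cdot A = g A g^{-1} - dg\, g^{-1}$, writing $g = \zeta^{-N}\tilde g\zeta^N$ and invoking the telescoping cancellation of the $\pm N\, d\zeta/\zeta$ pieces produced by $d\zeta^{\pm N}$, yields
\[
(g\cdot A)_{\CC_{0}} = \zeta^{-N}\!\left[\frac{\tilde g A_n \tilde g^{-1} + N}{\zeta} + \tilde g f_1 \tilde g^{-1} - \frac{d\tilde g}{d\zeta}\,\tilde g^{-1}\right]\!\zeta^N\, d\zeta.
\]
Matching this against the target $\left(\frac{W_n+N'}{\zeta} + f'_1(\zeta)\right)\, d\zeta$ becomes equivalent to demanding that, at each Laurent order at $\zeta = 0$, the bracket exhibit no $\lambda_N$-block-strictly-lower singular contribution after the $\zeta^{\pm N}$ conjugation.

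The leading-order condition is an algebraic equation for $\tilde g(0) \in \mathrm{P}_N$: the block-strictly-lower part of $\tilde g(0) A_n \tilde g(0)^{-1} + N$ must vanish, and its block-diagonal part must equal $W_n + N'$. I would solve this by using the Bruhat decomposition $A_n = L_n\Pi_n P_n W_n P_n^{-1} \Pi_n^{-1} L_n^{-1}$ from corollary \ref{cor:affine}, together with the identity $\Ad(\Pi_n^{-1})(N) = N'$, which encodes how the permutation $\Pi_n$ redistributes the splitting data among the blocks of the flag. The appropriate $\tilde g(0)$ factors through $L_n^{-1} \in \mathrm{N}(r) \subset \mathrm{P}_N$ composed with a block-triangular corrector; the combined effect produces the block-diagonal $W_n + N'$ at the leading order. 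The higher-order coefficients $\tilde g_k$ are then determined inductively: matching coefficients at Laurent order $\zeta^{k-1}$ produces a linear system whose unknowns (free entries of $P_l$ in the allowed block positions) and constraints (block-strictly-lower conditions) match in dimension by theorem \ref{theo:factorization}, with the triangular lower-central-series structure of $\mathfrak{n}_N$ guaranteeing recursive solvability.

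Essential uniqueness follows because the stabilizer of the normal form in $\Aut(E_N)$ reduces to the Levi subgroup $\mathrm{D}_N$, whose adjoint action fixes the diagonal $W_n + N'$, so the only residual freedom is the center $Z(\Aut(E_N))$. The main obstacle is the leading-order algebraic identity — namely, that the block-diagonal part of $\tilde g(0) A_n \tilde g(0)^{-1} + N$ can indeed be arranged to equal $W_n + N'$. Verifying this rests on tracking how the Bruhat parameters of $A_n$, conjugated through $\Pi_n$, interact with the block partition $\lambda_N$: concretely, one expands $\Pi_n(W_n + c_n)\Pi_n^{-1}$ in block components, compares with $W_n + N' - N$ on the block-diagonal, and checks that all remaining off-block-diagonal discrepancies are absorbed into strictly block-upper terms that can be matched to the holomorphic correction $f'_1$ via the higher-order gauge constructed above.
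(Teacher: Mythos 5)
Your setup is sound as far as it goes: the parametrization of $\Aut\left(E_{N}\right)$ near $\infty$ by the holomorphic germ $\tilde g=g_{1}$ with $\tilde g(0)\in\mathrm{P}_{N}$ and block-degree-constrained Taylor coefficients is correct, your formula for $(g\cdot A)_{\CC_{0}}$ is correct (the $\pm N\,d\zeta/\zeta$ terms do cancel), and the reformulation as block-wise Laurent conditions, with a square count of parameters versus constraints, is right. However, the two steps that carry the actual content are not closed. The first is the leading-order normalization, which you yourself call ``the main obstacle'' and only describe as something to be checked. The permutation $\Pi_{n}$ in the lemma is a representative of a \emph{double coset} in $\mathrm{W}_{N}\backslash\mathrm{W}(r)/\mathrm{W}_{N}$, produced by an ``inverse'' Bruhat factorization relative to the parabolic $\mathrm{P}_{N}$: the paper factors the eigenframe $C(0)$ of $A_{n}$ as $C(0)=P_{n}\Pi_{n}L_{n}$ with $P_{n}\in\mathrm{P}_{N}$ and $L_{n}\in\mathrm{N}^{c}_{N,\left[\Pi_{n}^{-1}\right]}$ (existence and uniqueness by rerunning lemmas \ref{lemma:semidirect} and \ref{lemma:factorization} in this parabolic setting), and in effect takes the leading gauge term to be $P_{n}^{-1}$. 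This $\Pi_{n}$ is \emph{not} the Weyl element attached to the complete flag $F_{n}$ in corollary \ref{cor:affine}, which is what you use; the two agree only up to $\mathrm{W}_{N}$-factors, and even your target $N'=\Ad\left(\Pi_{n}^{-1}\right)(N)$ changes when you switch between them. More seriously, conjugating $A_{n}=L_{n}\Pi_{n}\left(W_{n}+c_{n}\right)\Pi_{n}^{-1}L_{n}^{-1}$ by $\tilde g(0)=L_{n}^{-1}$ leaves $\Ad\left(\Pi_{n}\right)\left(W_{n}\right)+N+\Ad\left(\Pi_{n}\right)\left(c_{n}\right)$, and $\Ad\left(\Pi_{n}\right)\left(c_{n}\right)$ generically has nonzero strictly block-lower entries. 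These cannot be ``absorbed into $f_{1}'$'': under the $\zeta^{\pm N}$ conjugation only strictly block-\emph{upper} singular terms become holomorphic, while block-lower singular terms become \emph{more} singular and are exactly what must vanish identically. Removing them by an unspecified ``block-triangular corrector'' in $\mathrm{P}_{N}$ is precisely the content of the parabolic double-coset decomposition above, so as written your leading-order step presupposes the statement it is meant to establish.

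The second gap is the inductive step. Squareness of the linear system at each order, plus the nilpotency of $\mathfrak{n}_{N}$, does \emph{not} give solvability: the operator acting on the unknown coefficient $\tilde g_{k}$ is a twisted adjoint by the normalized residue shifted by $-k$, whose eigenvalues are of the form $\alpha_{ni}-\alpha_{nj}-k$ (in rank $2$ one computes the scalar coefficient $\pm\left(\alpha_{n1}-\alpha_{n2}\right)-k$ explicitly). Invertibility for all $k\geq 1$ is a non-resonance condition; it holds here only because the weights lie in $[0,1)$, so eigenvalue differences of $A_{n}$ lie in $(-1,1)$. If those differences were nonzero integers the induction would genuinely fail — this is the classical resonance phenomenon behind Bolibrukh-type counterexamples, which the paper's surrounding discussion explicitly mentions — yet your argument never invokes semisimplicity of $A_{n}$ or the range of the weights. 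The paper's proof takes a different route that packages this analytic input once and for all: it passes to a local fundamental solution $Y(\zeta)=\zeta^{-N}\Psi(\zeta)\zeta^{-W_{n}}$ (existence of the holomorphic invertible $\Psi$ is exactly where semisimplicity and non-resonance enter), after which the gauge action becomes left multiplication $\Psi\mapsto g_{1}\Psi$ and the normalization reduces to the purely linear problem of prescribing vanishing orders of the lower blocks of $g_{1}\Psi$, with essential uniqueness read off from uniqueness of normalized fundamental solutions. Your direct coefficient-matching scheme can in principle be repaired, but only by importing both missing ingredients: the parabolic inverse Bruhat decomposition at leading order, and the non-resonance argument at every subsequent order.
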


\begin{proof} Consider a ``multivalued" local germ $Y(\zeta)$ satisfying the equation $\mathrm{d}Y + A_{\infty}(\zeta) Y = 0$, where $A_{\infty}(\zeta)$ is the local matrix-valued meromorphic form \eqref{eq:expansion-infty}, of the form
\[
Y(\zeta) = \zeta^{-N}\Psi(\zeta)\zeta^{-W_{n}}
\]
and $\Psi(\zeta) = \sum_{k=0}^{\infty} C(k)\zeta^{k}$ is a holomorphic $\GL(r,\CC)$-valued germ near $z_{n} = \infty$. In particular, $C(0)\in\GL(r,\CC)$ satisfies $C(0)W_{n}C(0)^{-1} = A_{n}$. Consider the unique ``inverse" Bruhat decomposition for $C(0)$,
\[
C(0) = P_{n}\Pi_{n} L_{n},
\]
with 
\[
P_{n}\in \mathrm{P}_{N},\qquad L_{n}\in \mathrm{N}^{c}_{N,\left[\Pi_{n}^{-1}\right]}. 
\]
whose existence and uniqueness, for a given representative of the class $[\Pi_{n}]$ in $\mathrm{W}_{N}\setminus \mathrm{W}(r)/\mathrm{W}_{N}$, follows from a similar argument to lemmas \ref{lemma:semidirect} and \ref{lemma:factorization}.
The group $\Aut\left(E_{N}\right)$ acts on $Y$ by restriction to a neighborhood of $z_{n} = \infty$, via left multiplication, preserving the local form of such $Y$. The statement of the lemma is equivalent to finding an automorphism $g = \{g_{0}(z),g_{1}(\zeta)\}$ such that $g_{1} Y$ can be furthermore expressed in the form
\begin{equation}\label{eq:solution-normalization}
\left(\Pi_{n} +\sum_{k=1}^{\infty}\widetilde{C}(k)\zeta^{k}\right) \zeta^{-(N' + W_{n})}.
\end{equation}
where $N' = \Ad\left(\Pi^{-1}_{n}\right)(N)$. Such $g$ can be constructed inductively, in terms of the holomorphic $\zeta$-germ $g_{1}$, as a product of matrix monomials of fixed and increasing degree; the only algebraic requirement to be satisfied is that the elements of every $jk$-block of 
\[
\Psi'(\zeta) = g_{1}(\zeta)\Psi(\zeta), 
\]
for $j > k$, have a zero of order at least $n_{j}-n_{k}$, to ensure that the product $\zeta^{-N} \Psi' \zeta^{N'}$ is a holomorphic germ in $\zeta$. The starting step on the construction of $g_{1}$ readily follows after setting $P_{n} = \mathrm{Id}$, since the groups 
\[
\mathrm{N}_{N}\qquad \text{and} \qquad\Ad\left(\Pi_{n}\right)\left(\mathrm{N}^{c}_{N,\left[\Pi^{-1}_{n}\right]}\right)
\]
intersect trivially, and $\Ad(\Pi_{n})(L_{n})$ belongs to the latter. 

Finally, observe that the remaining residues of $g\cdot A$ take the following form, in terms of $g$ and the residues $\{A_{i}\}$ of $A$, 
\[
\left(g\cdot A\right)_{i} = \Ad\left(g(z_{i})\right)\left(A_{i}\right),\qquad i=1,\dots,n-1.
\]
The uniqueness of such normalization follows from the uniqueness of fundamental solutions of the local system determined by $A$, normalized at $z_{n} = \infty$ as \eqref{eq:solution-normalization}.
\end{proof}

Thus, the local form  \eqref{eq:germ-0} of $g\cdot A$ over $\CC$ determines a matrix-valued meromorphic differential, with simple poles over $S$, and residues adapted to $\left(g\cdot\mathbf{F},\cW\right)$. Since the action of $\Aut\left(E_{N}\right)$ on logarithmic connections clearly does not alter the corresponding monodromy representation, we conclude the following result. 
  
\begin{corollary}\label{cor:Riemann-Hilbert}
The representative $g\cdot A$ described in lemma \ref{lemma:normalization-infty} determines a Fuchsian system on $\CC\PP^{1}$, with the same monodromy representation of $A$. Thus, the Riemann-Hilbert gauge is a representative $A_{\mathbf{F}}\in \cA_{\mathbf{F}}$ solving the Riemann-Hilbert problem for such monodromy representation. 
\end{corollary}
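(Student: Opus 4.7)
The plan is to extract both claims of the corollary directly from Lemma \ref{lemma:normalization-infty}, recognizing that removing the $\zeta^{\pm N}$-conjugation near $z_n = \infty$ is precisely what turns a logarithmic connection on $E_N$ into one on the trivial bundle.

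First, I would unpack the lemma to identify $g\cdot A$ as a Fuchsian system. By Lemma \ref{lemma:normalization-infty}, in the $\CC_0$-trivialization $g\cdot A$ admits near $z_n = \infty$ the expansion
\[
\left(\frac{W_n + N'}{\zeta} + f'_1(\zeta)\right)d\zeta,
\]
with $f'_1$ holomorphic and, crucially, without any surrounding $\zeta^{\pm N}$ factor. Writing $g\cdot A$ globally on $\CC$ in the form \eqref{eq:germ-0} as $\sum_{i=1}^{n-1}(g\cdot A)_i/(z - z_i)\,dz + f'_0(z)\,dz$, the requirement that its $\zeta$-expansion near $\infty$ have at most a simple pole forces the holomorphic term $f'_0$ to vanish identically (since $f'_0(1/\zeta)\zeta^{-2}$ would otherwise contribute a pole of order $\geq 2$). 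Combined with the simple poles at $z_1,\dots,z_{n-1}$ with transformed residues $\Ad(g(z_i))(A_i)$, this extends $g\cdot A$ to a matrix-valued meromorphic 1-form on $\CC\PP^1$ whose only singularities are simple poles at $S$, that is, a Fuchsian system on the trivial rank-$r$ bundle.

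Second, I would verify the preservation of monodromy. If $Y$ is a multivalued fundamental matrix solution of $dY + AY = 0$ on $\CC\PP^1 \setminus S$ based at some $p$, then $gY$ is a fundamental matrix solution of $dY' + (g\cdot A)Y' = 0$, as follows from the gauge transformation rule $g\cdot A = gAg^{-1} - dg\, g^{-1}$. Since $g\in \Aut(E_N)$ is single-valued on $\CC\PP^1$, analytic continuation of $gY$ around a loop $\gamma$ based at $p$ yields
\[
(gY)(\gamma)\big|_p = g(p)\, Y(\gamma)\big|_p = g(p)\, Y(p)\, M_\gamma = (gY)(p)\, M_\gamma,
\]
so the monodromy matrix is unchanged. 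Hence the Fuchsian system $g\cdot A$ realizes the original monodromy representation of $A$, and the pair $(\mathbf{F}' = g\cdot\mathbf{F},\, A_{\mathbf{F}'} = g\cdot A)$ qualifies as a holomorphic gauge in the sense of Definition \ref{def:gauge} solving the Riemann--Hilbert problem.

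The main technical obstacle has already been discharged by Lemma \ref{lemma:normalization-infty}; what remains for the corollary is essentially interpretive: recognizing (i) that the vanishing of the $\zeta^{\pm N}$-twist at $\infty$ characterizes Fuchsian behavior on the trivial bundle and simultaneously forces the polynomial part on $\CC$ to vanish, and (ii) that globally single-valued bundle automorphisms preserve monodromy verbatim, not merely up to simultaneous conjugation at the base point.
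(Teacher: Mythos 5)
Your proposal is correct and takes essentially the same route as the paper: the paper likewise deduces the Fuchsian property directly from the untwisted simple-pole expansion at $\infty$ supplied by lemma \ref{lemma:normalization-infty} (which, as you make explicit, forces the holomorphic part $f'_{0}$ of the form \eqref{eq:germ-0} to vanish), and then appeals to the fact that the single-valued $\Aut\left(E_{N}\right)$-action leaves the monodromy representation unchanged. Your write-up simply fills in details (the order-of-pole bookkeeping at $\zeta=0$ and the fundamental-solution computation $g\cdot Y \mapsto gY M_{\gamma}$) that the paper leaves implicit.
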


\begin{remark}
As opposed to the Bruhat gauge, the Riemann-Hilbert gauge is less natural from a geometric viewpoint, and in general does not have a continuous behavior with respect to moduli parameters, even over each Harder-Narasimhan stratum $\cN_{N}\subset\cN$. 
\end{remark}

\subsection{Specialization to rank 2} Let us consider the case when $r = 2$, so there is essentially one partition $2 = 1 + 1$. Then, we have that $\cF_{2} \cong \CC\PP^{1}$,  $\mathrm{W}(2) = \ZZ_{2} = \langle \Pi_{0}\rangle$, and the Bruhat stratification of $\cF_{2} = \cF_{2,\Pi_{0}}\sqcup \cF_{2,\Pi^{2}_{0}}$ corresponds to the standard cell decomposition 
\[
\CC\PP^{1} = \CC\sqcup\{\infty\}
\]
It readily follows that, for every $i = 1,\dots,n$, $\infty\in\cF^{i}_{2}$ is always a fixed point of $\Aut\left(E_{N}\right)$.

We will now give an explicit description of the Bruhat gauge in the generic case when 
any orbit representative possesses $d_{1} + \dots + d_{s-1} + 1$ components such that $F_{i_{j}} \in \cF^{i_{j}}_{2,\Pi_{0}}\cong \CC$ (the essential difference between the $n$-tuples in $\cF^{1}_{2,\Pi_{0}}\times\dots\times\cF^{n}_{2,\Pi_{0}}$ and those in its complementary locus in $\cF^{1}_{2}\times\dots\times\cF^{n}_{2}$ is that the latter possess components corresponding to $\infty$, that are fixed under the $\Aut\left(E_{N}\right)$-action). Therefore, the corresponding residue matrices of $A_{\mathbf{F}}$ take the form $A_{i} = B_{i}W_{i} B_{i}^{-1}$, where
\[
B_{i} = \begin{pmatrix}
1 & 0 \\
b_{i} & 1
\end{pmatrix}
\begin{pmatrix}
0 & 1 \\
1 & 0
\end{pmatrix}
\begin{pmatrix}
1 & 0 \\
c_{i} & 1
\end{pmatrix}
\]
Therefore, 
\[
A_{i} = \begin{pmatrix}
\alpha_{i2} & 0 \\
\beta_{i}b_{i} & \alpha_{i1}
\end{pmatrix} 
+ \beta_{i}c_{i}\begin{pmatrix}
 b_{i} & -1 \\
 b^{2}_{i} & -b_{i}
\end{pmatrix}
\]
where $\beta_{i} = \alpha_{i2} - \alpha_{i1}$. Thus, a Bruhat gauge corresponds to the normalization $b_{i_{0}} = b_{i_{1}} = \dots = b_{i_{d_{1} }} =0$ (such a gauge has already been discussed on $\cN_{0}$).  Moreover, a strong Bruhat gauge would exist whenever there is at least one $b_{i} \neq 0$, for $i \neq i_{0},\dots, i_{d_{1}}$. Therefore, an explicit strong Bruhat gauge could be given in such a case by normalizing the parameter $b_{i}$ to $b_{i} = 1$ over the Zariski open subset of any given stratum $\cN_{N}$ consisting of $\CC^{*}$-orbits for which $b_{i} \neq 0$.\footnote{An analogous normalization is used in \cite{LS13} to introduce open charts in moduli spaces of rank 2 logarithmic connections for a special symmetric choice of parabolic weights that forces the condition $\cN = \cN_{0}$ in our terminology. Thus, as another application of the construction of strong Bruhat gauges, we have a general mechanism to introduce affine charts for any stratum fibration $\cL_{N} \to \cN_{N}$, which in fact works for arbitrary rank and admissible parabolic weights.} The action of the subgroup $\mathrm{N}_{N}\times\dots\times \mathrm{N}_{N}\subset\Aut\left(E_{N}\right)$ on the residues $\left\{A_{i_{j}}\right\}$ leaves the parameters $c_{i_{0}},\dots,c_{i_{d_{1}}}$ unchanged. The latter are only transformed under the action of the residual subgroup $\mathrm{D}_{N}$.

On the other hand, a Riemann-Hilbert gauge corresponds to a choice of logarithmic connection representative $A_{\mathbf{F}}\in \cA_{\mathbf{F}}$, whose residues $A_{1},\dots A_{n-1}$ over $z_{1},\dots,z_{n-1}\in \CC\subset\CC\PP^{1}$ satisfy 
\[
\sum_{i = 1}^{n-1} A_{i} = - \left(W_{n} + N'\right).
\]
There is another important relation between the Riemann-Hilbert gauge for rank 2 and odd degree, and the classical theory of analytic ODE, for any choice of Bruhat stratum in $\cF_{2}^{1}\times\dots\times\cF_{2}^{n}$. For instance, in the large stratum $\cF^{1}_{2,\Pi_{0}}\times\dots\times\cF^{n}_{2,\Pi_{0}}$, giving rise to $\cN_{0}$, the equation
\[
\sum_{i=1}^{n}\tr\left(A_{i}\right) + \deg\left(E_{N}\right) = 0
\]
implies that, if the Riemann-Hilbert gauge is fixed, the complex parameters $c_{1},\dots, c_{n}$ will satisfy a system of 3 independent linear equations. It is important to remark the similarity of such parameters and the accessory parameters of the classical Fuchsian uniformization theory. Over $\cN_{0}$, such equations take the form
\[
\sum_{I=1}^{n-1}\beta_{i}c_{i} = 0,\qquad\qquad \sum_{I=1}^{n}b_{i}\beta_{i}c_{i} = \frac{1}{2}\left(1 - \sum_{i=1}^{n}\beta_{i}\right),
\]
\[
\sum_{I=1}^{n-1}z_{i}\beta_{i}c_{i} + \beta_{n}c_{n} = 0.
\]

\begin{remark}\label{remark:even-degree}
It should be remarked that similar formulae holds for even degree, evenly-split bundles (which are twists of the trivial bundle), where there is a direct correspondence between logarithmic connections adapted to a parabolic structure and the corresponding Fuchsian systems, although 
\[
\Aut\left(E_{N}\right)\cong \textrm{GL}(2,\CC).
\]
In such case, the Bruhat and Riemann-Hilbert gauges degenerate into a common gauge. The group $P\left(\Aut\left(E_{N}\right)\right)$ acts freely on any open subset of $\cF^{1}_{2}\times\dots\times\cF^{n}_{2}$ whose projection to the flags over 3 different base points (for instance $z_{n-2}, z_{n-1}, z_{n}$), is a configuration space. Such flags can be normalized, respectively to the values $0,1,\infty\in\CC\PP^{1}\cong \cF_{2}$ \cite{HH16}.
\end{remark}

\noindent \textbf{Acknowledgments.}
I would like to kindly thank CIMAT (Mexico) for its generosity during the development of the present work. 

\bibliographystyle{amsalpha}
\bibliography{Automorphisms}

\end{document}